\newtheorem{theorem}{Theorem}[section]
\newtheorem{lemma}[theorem]{Lemma}
\newtheorem{corollary}[theorem]{Corollary}
\newtheorem{prop}[theorem]{Proposition} 
\theoremstyle{definition}
\newtheorem{example}[theorem]{Example}
\theoremstyle{remark}
\numberwithin{equation}{section}
\newcommand{\bigo}[1]{O\left( {#1} \right)}
\begin{document}



\thanks{}

\author{}
\address{}
\curraddr{}
\email{}
\thanks{}

\subjclass[2010]{Primary: 05A15; Secondary: 05A16, 05C05, 05C12}
\title{Lengths of Paths In Rooted Trees}
\author{Keith Copenhaver}
\email{keithc@ufl.edu}
\date{}


\begin{abstract}
We provide formulas for generating functions of many types of paths in various rooted tree structures. We compute the $k$th moment of the generating functions for various types of vertical paths. In two specific familes of trees we find exact closed formulas for expectations and their asymptotic values. Some of these closed formulas are surprisingly simple.

 \noindent{\textbf{Mathematics Subject Classification} \enspace 05A15 $\cdot$ 05A16 $\cdot$ 05C05 $\cdot$ 05C12}

\keywords{tree, enumeration, asymptotics}
\end{abstract}

\maketitle
\section{Introduction}

A tree is a connected graph without cycles. A tree is rooted if it has one vertex designated as the root. In a rooted tree there is a natural orientation imposed on the vertices, the neighbor of a vertex which is closer to the root is called the parent, and the vertices which are further away are called the children. Extending the metaphor, if a vertex $v$ is contained in the subtree rooted at a vertex $u$, then $v$ is a descendant of $u$, and $u$ is an ancestor of $v$.\\

Trees have widespread applications. They are used to model networks of many kinds, and they are used as data structures. In some of these applications, leaves (vertices which have no children) have particular significance. In data structures, the leaves will always be the last thing called in a search algorithm, so one might like the leaves to be relatively close to the root. If we represent a computer network as a rooted tree, then the leaves are the nodes with the least amount of access, and presumably, the least security; in this application, we might want most vertices to be far from leaves. Height is the length of the longest path from the root; this path necessarily ends at a leaf. Height has been a longstanding topic of study. There have been a number of papers recently about protection and rank, starting with the seminal paper by Cheon and Shapiro \cite{cheon}. A vertex is $k$-protected if its closest leaf descendant is at least $k$ steps away, and it has rank $k$ if its closest leaf descendant is exactly $k$ steps away. Height and rank provide two halves of the same coin: on some level, they both address the question ``how far away are the leaves from the root?'' It is a decades old result from de Bruijn, Knuth, and Rice that the \textit{furthest} leaf in a uniformly selected general tree of arbitrarily large size is expected to be arbitrarily far away \cite{de_bruijn_knuth_rice_1972}. This was later proven for binary trees by Flajolet and Odlyzko as well\cite{flajolet_odlyzko_1982}. It is a recent result that the \textit{closest} leaf in general trees is expected to be less than two edges away \cite{copenhaver_2017}. This leaves the question of what exactly happens in the middle? We will answer that question precisely with both explicit formulas and asymptotic estimations. \\

This also leads to generalization: what if one were to pick two vertices arbitrarily? What if we pick a vertex and any of its descendants? We will call paths between a vertex and one of its descendants ``vertical paths.''\\

We calculate the $k$th moments of path lengths of certain vertical paths in certain simple varieties of trees, and also consider general paths in two specific families of trees. General trees are trees in which every vertex may have any number of children. Binary trees are trees in which every vertex may have zero, one, or two children, where single children may be left or right children. \\
There are some interesting relationships in these two families, for example:

\begin{itemize}
\item The expected length of a path from the root to a leaf is very close to the average of the expected distances to the closest and furthest leaves.
\item Even downward paths, the paths which have the shortest expected length, are expected to be arbitrarily long in arbitrarily large trees.
\item In these families of trees, insisting that paths end in leaves does not result in large difference in the expected length of downward paths; asymptotically the expectations differ by only a constant.
\item There is a great deal of overlap across the generating functions for both number of paths and number of edges in paths within each family. This situation is less pronounced in other families of trees, where the generating function for the family is less pleasant.
\item The expected length of a path from a root to a leaf is asymptotically equal to the expected length of an arbitrary path in trees on $n$ vertices, and, in the case of general trees, these expectations are exactly equal for all $n$.
\end{itemize}
\section{Vertical Paths}

In horticulture, grafting is a process where a branch of one tree is removed and made to grow as a branch of a different tree. We will use a lemma which does something similar with graph theoretic trees.

\begin{lemma}[Grafting Lemma]

Let $\mathcal T$ be a class of trees where the leaves are distinguishable and any subtree is a valid tree. Then there is a bijection between the set of vertices in trees of size $n$ in $\mathcal T$ and the set of ordered pairs of leaves in trees of size $n-k+1$ and trees of size $k$, with $1 \leq k \leq n$. Further, this bijection extends to allow marking vertices or edges and restricts to subsets of subtrees and trees.

\end{lemma}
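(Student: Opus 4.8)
The plan is to make the horticultural metaphor literal: to a vertex $v$ of a tree $T$ we associate the two pieces of $T$ obtained by ``cutting'' $T$ just below $v$, and we check that these pieces are exactly the data on the right-hand side, with grafting (regluing) as the inverse operation.

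Concretely, for the forward map let $S$ be the subtree of $T$ rooted at $v$ and put $k=|S|$, so $1\le k\le n$, and let $U$ be the tree obtained from $T$ by deleting every proper descendant of $v$. Since $v$ has lost all of its children, $v$ is a leaf of $U$, and $|U|=n-(k-1)=n-k+1$ because exactly the $k-1$ proper descendants of $v$ were removed. We map $(T,v)$ to the ordered pair consisting of the leaf-marked tree $(U,v)$ of size $n-k+1$ and the tree $S$ of size $k$. For the reverse map, given a size-$(n-k+1)$ tree $U$ with a marked leaf $\ell$ and a size-$k$ tree $S$, we build $T$ by identifying the root of $S$ with $\ell$ — i.e. by grafting $S$ onto $U$ at the leaf $\ell$ — and mark the identified vertex as $v$; then $|T|=(n-k+1)+k-1=n$.

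There are three things to verify. First, well-definedness in both directions: $S$ is a rooted subtree of $T$ and $U$ is obtained by pruning subtrees, so $S,U\in\mathcal T$ provided $\mathcal T$ is closed under passing to subtrees and under pruning them off; for the reverse map, grafting changes no vertex's local configuration except that $\ell$ acquires the child-structure of $\mathrm{root}(S)$, so the reglued tree lies in $\mathcal T$ provided $\mathcal T$ is closed under that grafting — both closures being exactly what the hypotheses on $\mathcal T$ are meant to supply, and the assumption that leaves are distinguishable being what makes ``the marked leaf $\ell$'' and ``the new leaf $v$'' bona fide elements of the stated sets. Second, the two maps are mutually inverse: the proper descendants of $v$ in $T$ are precisely $S\setminus\{v\}$, so cutting and then regrafting restores $(T,v)$, and conversely the subtree rooted at the grafting vertex of a reglued tree is $S$, while deleting its proper descendants returns $(U,\ell)$. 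Third, the two stated extensions are immediate: a mark placed on a vertex or edge strictly below $v$ travels with $S$ and any other mark travels with $U$, compatibly with both maps, and the maps never leave a subclass of $\mathcal T$ that is itself closed under taking, pruning, and grafting subtrees, so the bijection restricts.

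The bookkeeping around the cut vertex is where one must be careful: this vertex is simultaneously a leaf of $U$ and the root of $S$, which is the source both of the ``$+1$'' in $n-k+1$ and of the full range $1\le k\le n$, with $k=1$ corresponding to $v$ being a leaf of $T$ (so $S$ is a single vertex) and $k=n$ to $v$ being the root (so $U$ is a single vertex). The one genuinely load-bearing point is that the hypotheses must be read to give closure of $\mathcal T$ under grafting a tree onto a leaf, not merely under pruning; this holds, for instance, for simple varieties of trees and for the binary trees used later, since there membership is governed by a local condition on each vertex's children, and once that is granted there is no further obstacle.
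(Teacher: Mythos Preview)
Your proof is correct and follows essentially the same cut-and-graft approach as the paper's: map $(T,v)$ to the pair consisting of the tree with $v$'s proper descendants removed (so $v$ becomes a marked leaf) together with the subtree rooted at $v$, with regrafting as the inverse. Your version is considerably more careful than the paper's terse few lines---in particular, your explicit flag that the hypotheses must be read as giving closure under grafting onto a leaf, not just under pruning, is a point the paper leaves implicit.
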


\begin{proof}
Fix a vertex and remove the subtree rooted at that point, replacing it with a leaf. This gives a tree with a marked leaf and another tree. Conversely, fix a tree with a marked leaf and another tree, remove the leaf, and replace it by placing the tree at that vertex as a subtree. Note that if there were any marked edges or vertices in the tree, they will now correspond to marked vertices or edges in the designated subtree. 
\end{proof}
We will call a family of trees where the Grafting Lemma applies \textit{graftable}. On a basic level, the Grafting Lemma implies that 
\begin{equation}
V_\mathcal{T} = \frac{L_\mathcal{T}(x)}{x} T_{\mathcal T}(x),
\end{equation}
and, since 
$$\displaystyle V_\mathcal{T} (x) = \sum_{n=1}^\infty n \left( [x^n] T_\mathcal{T}(x) \right ) x^n=x T_\mathcal{T}^\prime(x),$$
rearranging yields 
$$L_\mathcal{T}(x) = \dfrac{x^2 T_\mathcal{T} ^\prime(x)}{T_\mathcal{T}(x)}.$$
This gives a method for computing the generating function for leaves in any case where the generating function for trees is known, without requiring bivariate generating functions or insights into the functional equation satisfied by the trees. However, if it is known that the generating function satisfies a functional equation of the form 
\begin{equation}
T_\mathcal{T}= x \Phi_\mathcal{T} (T_\mathcal{T}),
\end{equation}
 this relation implies that $x T_\mathcal{T} ^\prime (x)= T_\mathcal{T}(1- x \Phi_\mathcal{T} ^\prime (T_\mathcal{T}))^{-1},$ (this equation was noted in \cite{entringer_meir_moon_szekely_1994}) and comparing these equations yields

$$L_\mathcal{T}(x)= \dfrac{x}{1-x \Phi_\mathcal{T} ^\prime (T_\mathcal{T}(x))}.$$ \\ 

This bijection is powerful because it preserves many qualities of the tree (or root) being grafted. It is generally simpler to construct a generating function for trees with roots of a given type, and this immediately generalizes those results to arbitrary vertices. The core idea of this lemma was used in \cite{cheon}.\\

For a fixed family of trees $\mathcal T$, let the set $\{ T_\alpha \}$ be the set of trees in $\mathcal{T}$, let $r(T_\alpha)$ be the root of the tree $T_\alpha,$ let $d(v_1, v_2)$ be the number of edges in the path between vertices $v_1$ and $v_2$, and let $T (x),$ $V(x),$ and $L(x)$ be the generating functions for the number of trees, vertices, and leaves on $n$ vertices, respectively. \\

\begin{theorem}
Let $\mathcal{T}$ be a graftable family of trees satisfying (2.2). Let 
$$D_k(x) = \sum_{T_\alpha \in \mathcal T} \sum_{v \in T_\alpha} d(v, r(T_\alpha))^k x^{|T_\alpha|}.$$

Then, for $k \in \mathbb N,$ $D_k(x)=(V(x) - T(x))P_k \left(\frac{L(x)}{x} \right)$, where 
\begin{equation}
P_k \left(\frac{L(x)}{x} \right) = k! \left( \frac{L(x)}{x} \right)^k -  \frac{k!(k-1)}{2} \left( \frac{L(x)}{x} \right)^{k-1} + Q_k\left(\frac{L(x)}{x} \right),
\end{equation}
and $Q_k\left(\frac{L(x)}{x} \right)$ is a polynomial in $\frac{L(x)}{x}$ with of degree $k-2$ if $k \geq 2$ and $Q_k\left(\frac{L(x)}{x} \right)=0$ otherwise.

\end{theorem}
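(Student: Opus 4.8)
The plan is to pass from ordinary powers to falling factorials. Writing $S(k,j)$ for the Stirling numbers of the second kind and $d^{\underline{j}}=d(d-1)\cdots(d-j+1)$, we have $d^{k}=\sum_{j=0}^{k}S(k,j)\,d^{\underline{j}}$, and since $D_k(x)=\sum_{j=0}^{k}S(k,j)\,\mathcal D_j(x)$ it suffices to evaluate
\[
\mathcal D_j(x)=\sum_{T_\alpha\in\mathcal T}\sum_{v\in T_\alpha}d(v,r(T_\alpha))^{\underline{j}}\,x^{|T_\alpha|}
\]
for every $j\ge 1$ (the case $k\ge 1$ being the one at issue). The reduction pays off because $d(v,r(T_\alpha))^{\underline{j}}$ is exactly the number of ordered $j$-tuples of distinct edges on the path from $r(T_\alpha)$ to $v$, so $\mathcal D_j(x)$ is the generating function for trees of $\mathcal T$ carrying a marked vertex $v$ together with such a tuple. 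Applying the Grafting Lemma at $v$, that is, cutting off the subtree rooted at $v$ and replacing it by a leaf $\ell$, turns this datum into a leaf-marked tree (with $\ell$ at the former site of $v$) together with the detached subtree, an arbitrary member of $\mathcal T$ with generating function $T(x)$; since the edges of the $r(T_\alpha)$-to-$v$ path coincide with the edges of the $r(T_\alpha)$-to-$\ell$ path, the $j$-tuple is transported unchanged, and correcting for the single vertex common to the two pieces yields
\[
\mathcal D_j(x)=\frac{T(x)}{x}\,E_j(x),
\]
where $E_j(x)$ is the generating function for leaf-marked trees carrying an ordered $j$-tuple of distinct edges on the root-to-leaf path.

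Next I would compute $E_j$ by a spine decomposition based on the functional equation $T=x\Phi_{\mathcal T}(T)$. The generating function for leaf-marked trees whose marked leaf lies at depth $d$ is $L_d(x)=x(x\Phi_{\mathcal T}'(T))^{d}$: the leaf itself contributes $x$, and each of the $d$ spine vertices strictly above it contributes $x\Phi_{\mathcal T}'(T)$, the derivative recording which child continues the spine while the remaining children range over arbitrary trees. (As a check, $\sum_{d\ge 0}L_d(x)=\dfrac{x}{1-x\Phi_{\mathcal T}'(T)}=L(x)$, the leaf generating function derived above.) Choosing and ordering $j$ of the $d$ spine edges contributes a factor $j!\binom{d}{j}$, so, writing $y=x\Phi_{\mathcal T}'(T)$,
\[
E_j(x)=j!\sum_{d\ge 0}\binom{d}{j}\,x\,y^{d}=j!\,x\,\frac{y^{j}}{(1-y)^{j+1}}.
\]
Setting $u=L(x)/x$, we have $\dfrac{1}{1-y}=u$ and hence $y=1-u^{-1}$, so the right-hand side collapses to $E_j(x)=j!\,x\,u\,(u-1)^{j}$ and $\mathcal D_j(x)=j!\,T(x)\,u\,(u-1)^{j}$. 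Finally $V(x)=\dfrac{L(x)}{x}\,T(x)=u\,T(x)$, so $V(x)-T(x)=(u-1)\,T(x)$ and
\[
\mathcal D_j(x)=j!\,(V(x)-T(x))\,u\,(u-1)^{j-1}\qquad(j\ge 1).
\]

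Putting everything together, and using $S(k,0)=0$ for $k\ge 1$ to discard the $j=0$ term,
\[
D_k(x)=(V(x)-T(x))\,u\sum_{j=1}^{k}S(k,j)\,j!\,(u-1)^{j-1}=(V(x)-T(x))\,P_k(u),
\]
so $P_k(u)=u\sum_{j=1}^{k}S(k,j)\,j!\,(u-1)^{j-1}$ is a polynomial of degree $k$ in $u=L(x)/x$. To read off the top two coefficients: the $j=k$ summand is $k!\,u\,(u-1)^{k-1}=k!\,u^{k}-k!(k-1)\,u^{k-1}+\cdots$; the $j=k-1$ summand, using $S(k,k-1)=\binom{k}{2}$, is $\binom{k}{2}(k-1)!\,u\,(u-1)^{k-2}=\tfrac{k!(k-1)}{2}\,u^{k-1}+\cdots$; and every summand with $j\le k-2$ has degree at most $k-2$. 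Adding, the coefficient of $u^{k}$ is $k!$ and that of $u^{k-1}$ is $-k!(k-1)+\tfrac{k!(k-1)}{2}=-\tfrac{k!(k-1)}{2}$, while the remaining lower-order terms form the polynomial $Q_k(u)$ of degree at most $k-2$, which vanishes for $k=1$; this is the asserted form. The generating-function bookkeeping here is routine; I expect the load-bearing points to be the two structural inputs: that the Grafting bijection genuinely carries the chosen edges along, and that the spine decomposition gives $L_d(x)=x(x\Phi_{\mathcal T}'(T))^{d}$.
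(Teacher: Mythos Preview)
Your proof is correct and follows a genuinely different route from the paper. The paper shows directly, by iterated grafting, that $\bigl(L(x)/x\bigr)^{k}(V(x)-T(x))$ enumerates pairs (tree, non-root vertex $v$) weighted by $\binom{d(v,r)+k-1}{k}$, the number of $k$-element \emph{multisets} of edges on the root-to-$v$ path; it then expands that binomial coefficient as a polynomial in $d(v,r)$ and recovers $D_k$ by an induction that peels off the lower $D_i$'s. You instead expand $d^{k}$ in falling factorials, interpret $d^{\underline{j}}$ as counting ordered $j$-tuples of \emph{distinct} edges, and compute each $\mathcal D_j$ in closed form via the Grafting Lemma plus a spine decomposition before reassembling through Stirling numbers. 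Your approach avoids the induction and actually proves more than the theorem states: you obtain the explicit formula $P_k(u)=u\sum_{j=1}^{k}S(k,j)\,j!\,(u-1)^{j-1}$ for the entire polynomial, not merely its two leading coefficients. The paper's argument, on the other hand, rests on a single bijection and does not need the depth-by-depth factorisation $L_d(x)=x\bigl(x\Phi'(T)\bigr)^{d}$, so it transfers a bit more readily to settings where that spine formula is less transparent. One small point worth making explicit: writing ``the leaf itself contributes $x$'' uses $\Phi(0)=1$, i.e.\ a unique one-vertex tree; this is the same normalisation already implicit in the paper's Grafting Lemma and in the identity $L(x)=x/(1-x\Phi'(T))$ you invoke as a check.
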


%

\begin{proof}
We proceed by induction on $k$. The statement for $k=0$ corresponds to simply counting the vertices which are not the root, which does indeed have generating function $V(x)-T(x)$. \\

If $k=1$, then $D_1(x)$ is the number of edges in all paths from the root, also known as the path length. Take any path from the root with a marked edge. Remove the subtree which begins at the top of the marked edge; this gives an ordered pair of a tree with a marked leaf which has been removed, enumerated by $\frac{L(x)}{x}$ and a tree with a path ending at a non-root vertex, enumerated by $V(x)-T(x)$. \\

In general, consider the generating function $ \left(\frac{L(x)}{x} \right)^k (V(x)-T(x))$. This corresponds to a sequence of trees with marked leaves which have been removed (allowing the singleton tree, with the single vertex removed), and a tree with a marked non-leaf vertex. Call the root of this tree $r$ and the marked vertex $v$. Graft each of the trees together, in order, attaching each at the location of the marked leaf. This gives a tree with a marked path terminating at $v$, and $k$ marks on the vertices which are ancestors of $v$. For each mark on a vertex, mark the edge below it. Since each edge may be marked multiple times, $ \left(\frac{L(x)}{x} \right)^k (V(x)-T(x))$ counts each path once for every $k$ element multiset of $l=d(v, r)$. See figure 1 for an illustration with $k=3.$

\begin{figure}[ht]
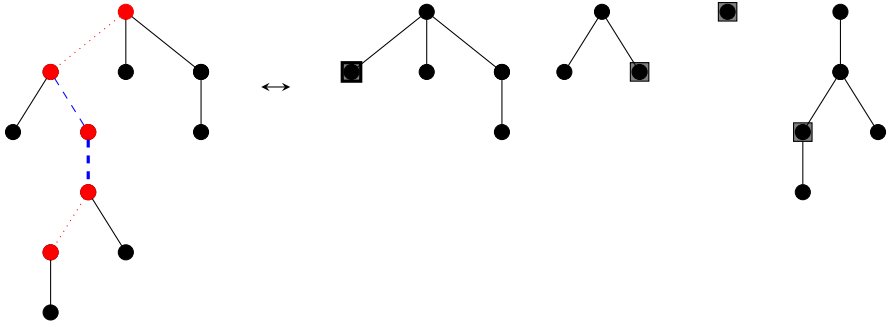

\begin{istgame}
\setistSolidNodeStyle{.2cm}
\setistRectangleNodeStyle{7pt}[black][.5]
\xtdistance{8mm}{10mm}
\xtShowEndPoints
\istroot(A01)(0,0)[red]
\istb[red, dotted]
\istb
\istb
\endist
\istroot(A11)(A01-1)[red]
\istb
\istb[blue, dashed]
\endist
\istroot(A13)(A01-3)
\istb
\endist
\istroot(A32)(A11-2)[red]
\istb[blue, dashed, very thick]
\endist
\istroot(A41)(A32-1)[red]
\istb[red, dotted]
\istb
\endist
\istroot(A51)(A41-1)[red]
\istb
\endist

\draw[black, <->,
    shorten >=0.4pt]
    (1.8,-1) -- (2.2,-1);

\istroot(B01)(4,0)
\istb
\istb
\istb
\endist
\istroot(B11)(B01-1)[rectangle node, black, very thick]
\endist
\istroot(B13)(B01-3)
\istb
\endist

\istroot(C01)(6.33,0)
\istb
\istb
\endist
\istroot(C12)(C01-2)[rectangle node, black]
\endist

\istroot(D01)(8,0)[rectangle node, black]
\endist

\istroot(D01A)(8,0)
\endist

\istroot(E01)(9.5,0)
\istb
\endist
\istroot(E11)(E01-1)
\istb
\istb
\endist
\istroot(E21)(E11-1)[rectangle node]
\istb
\endist
\end{istgame}
\caption{An illustration that $\left(\frac{L(x)}{x}\right)^3(V(x)-T(x))$ enumerates the number of 3 element multisets of the edges in a marked path. The second edge is marked once, and the third edge is marked twice, which corresponds to the triplet of trees with marked leaves and the tree with a marked vertex.}
\end{figure}

Thus
\begin{align*}
\left(\frac{L(x)}{x} \right)^k (V(x)-T(x)) &=  \sum_{T_\alpha \in \mathcal T} \sum_{v \in T_\alpha} {d(v, r(T_\alpha)) +k -1 \choose k} x^{|T_\alpha|}.
\end{align*}
Since
\begin{equation}
{n+k -1 \choose k}=\frac{1}{k!} \left(n^k - \frac{k(k-1)}{2}n^{k-1} + q_k(n) \right),
\end{equation}
where $q_k(n)$ is a polynomial in $n$ of degree $k-2$ if $k \geq 2$, and $q_1(n)=q_0(n)=0$.

Multiplying both sides by $k!$, substituting (2.4), and grouping powers of $d(v, r(T_\alpha))$, we have

$$\left(\frac{L(x)}{x} \right)^k (V(x)-T(x)) =  \frac{1}{k!}D_k(x) + \frac{k(k-1)}{2 k!} D_{k-1}(x) + ...,$$
where the ellipsis contains only rational expressions in $k$ multiplied by $D_i(x)$ with $0 \leq i \leq k-2.$ Solving for $D_k(x)$ and applying the induction hypothesis to the lower $D_i(x)$, we have

\begin{align*}
D_{k}(x) &= k! \left(\frac{L(x)}{x} \right)^k (V(x)-T(x)) - \frac{k(k-1)}{2} (k-1)! \left(\frac{L(x)}{x} \right)^{k-1} (V(x)-T(x)) \\
 &  + Q_k{\left(\frac{L(x)}{x} \right)(V(x)-T(x))}.
 \end{align*}
\end{proof}

\begin{corollary}
Let $P \left( \frac{L(x)}{x} \right)$ be the same as in (2.3). The generating functions for paths weighted by $k$th powers of their length with the following endpoints are as follows:
\begin{enumerate}
\item Paths between the root and leaves: $$\sum_{T_\alpha \in \mathcal T} \ \  \sum_{\mathclap{\substack{l \in T_\alpha \\ l \text{ is a leaf}}}} d(l, r(T_\alpha))^k x^{|T_\alpha|}=P \left( \frac{L(x)}{x} \right) (L(x)- [x^1] T(x)).$$
Note that $[x^1]T(x)$ indicates the coefficient of $x$ in the generating function, or the number of trees on a single vertex.
\item Paths between an arbitrary vertex and one of its descendants: $$\sum_{T_\alpha \in \mathcal T} \ \ \ \  \sum_{\mathclap{\substack{v_1, v_2 \in T_\alpha \\ v_2 \text{ is a descendant of }v_1}}}d(v_1, v_2)^k x^{|T_\alpha|}=P \left( \frac{L(x)}{x} \right) \frac{L(x)}{x}(V(x)-T(x)).$$
\item Paths between an arbitrary vertex and one of its descendant leaves: $$\sum_{T_\alpha \in \mathcal T} \ \ \sum_{\mathclap{\substack{v, l \in T_\alpha \\ l \text{ is a descendant of }v \\ l \text{ is a leaf}}}}d(v, v_2)^k=P \left( \frac{L(x)}{x} \right) \frac{L(x)}{x}(L(x)- [x^1] T(x)).$$
\end{enumerate}
\end{corollary}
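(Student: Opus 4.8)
The plan is to derive each of the three identities from the main theorem by applying the Grafting Lemma once more in the appropriate direction, so that the corollary becomes a bookkeeping exercise rather than a fresh induction. Throughout I would write $P$ for $P(L(x)/x)$, so that Theorem 2.1 reads $D_k(x) = P \cdot (V(x) - T(x))$, and I would think of $D_k(x)$ as the generating function enumerating, with multiplicity $d(v,r)^k$, the marked pairs consisting of a tree $T_\alpha$ together with a non-root vertex $v$.

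For part (1), I would start from the observation that a path from the root to a leaf is the same data as a path from the root to an arbitrary vertex together with the instruction that we then graft on an extra copy of the single-vertex tree at that vertex — equivalently, the Grafting Lemma identifies trees of size $n$ with a marked leaf at distance $\ell$ from the root and trees of size $n$ with a marked non-root vertex at distance $\ell$ from the root, \emph{after} one accounts for the one-vertex tree that gets absorbed. Concretely, marking a leaf $l$ in $T_\alpha$ and pruning the subtree at $l$ (which is just $l$ itself) produces a tree with a marked leaf; conversely a tree with a marked leaf, regrafted, yields the leaf back. This shows that the generating function for (root, leaf) pairs weighted by $d(l,r)^k$ equals $D_k(x)$ divided by the bijective correction, and tracking the single-vertex exception gives the factor $L(x) - [x^1]T(x)$ in place of $V(x) - T(x)$. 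The one subtlety I would be careful about is exactly this boundary term: the root of a one-vertex tree is itself a leaf, so it would be counted as a length-$0$ root-to-leaf path, and one must decide whether to include it; the statement as written excludes it, which is why $[x^1]T(x)$ is subtracted rather than added, and I would make sure the bijection I describe respects that choice consistently with the $k=0$ base case of the theorem.

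For parts (2) and (3), I would apply the Grafting Lemma in the other direction: a pair $(v_1, v_2)$ with $v_2$ a descendant of $v_1$ is, after pruning the subtree rooted at $v_1$, the same as a tree with a marked leaf (enumerated by $L(x)/x$) together with a tree whose root plays the role of $v_1$ and which carries a marked descendant $v_2$ — and this latter object, with its distinguished root-to-$v_2$ path, is precisely what $D_k(x)/(V(x)-T(x)) \cdot (V(x)-T(x)) = D_k(x)$ enumerates, because $d(v_1,v_2) = d(v_2, r)$ in the pruned tree. Hence multiplying $D_k(x) = P\cdot(V(x)-T(x))$ by $L(x)/x$ gives part (2). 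Part (3) is then the composition of the two moves: prune at $v_1$ to peel off one factor of $L(x)/x$, reducing to a root-to-descendant-leaf count, and then invoke part (1) to replace $V(x)-T(x)$ by $L(x) - [x^1]T(x)$; equivalently one applies both grafting reductions at once. The main obstacle, as in part (1), is purely the careful handling of degenerate cases — trivial subtrees, the single-vertex tree, and length-zero paths — so that the exceptional coefficients $[x^1]T(x)$ land in the right place; the algebraic content is entirely carried by Theorem 2.1 and the Grafting Lemma, and once the bijections are set up correctly each identity is immediate.
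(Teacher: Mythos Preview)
Your handling of parts (2) and (3) is correct and is exactly what the paper does: apply the Grafting Lemma at the upper endpoint $v_1$, peeling off a factor of $L(x)/x$ and reducing to the root case already covered by Theorem~2.2 (for (2)) or by part~(1) (for (3)).

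Part (1), however, has a genuine gap. Your description of the bijection is confused in both directions. Saying that a root-to-leaf path is ``a root-to-vertex path together with a single-vertex tree grafted at that vertex'' does not make the endpoint a leaf; grafting a trivial tree at $v$ leaves $v$ exactly as it was. And the converse move you describe---pruning at a leaf $l$, whose subtree is $l$ itself---is just the identity map, so it cannot ``divide out a bijective correction'' from $D_k(x)$. What the Grafting Lemma actually gives, when applied at a marked non-root vertex $v$, is a pair (tree with a marked non-root \emph{leaf} at distance $d(v,r)$, arbitrary tree), so that
\[
D_k(x)\;=\;\bigl(\text{GF for part (1)}\bigr)\cdot\frac{T(x)}{x}.
\]
Inverting and using $L(x)=xV(x)/T(x)$ then yields $P\cdot\frac{x}{T(x)}(V(x)-T(x))=P\cdot(L(x)-x)$, which matches the stated answer once one notes that graftability forces a unique one-vertex tree, i.e.\ $[x^1]T(x)=1$. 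None of this algebra appears in your write-up, and the words you do use do not describe this bijection.

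The paper avoids this detour entirely: for (1) it simply reruns the proof of Theorem~2.2 verbatim, observing that the final factor in the product $\bigl(L(x)/x\bigr)^k\cdot(\,\text{last tree}\,)$ can be taken to be a tree with a marked non-root \emph{leaf} rather than a marked non-root \emph{vertex}, since the grafting construction attaches things above the marked vertex and so preserves the property of being a leaf. This replaces $V(x)-T(x)$ by $L(x)-[x^1]T(x)\cdot x$ directly, with no division by $T(x)/x$ and no appeal to the identity $L=xV/T$.
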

\begin{proof}

\end{proof}
\begin{enumerate}
\item The argument is indentical to the proof of Theorem 2.2, but the final grafted tree should contain a marked leaf instead of a marked general vertex, so the role of $(V(x)-T(x))$ is replaced by the generating function for non-root leaves, which is $(L(x) - [x^1]T(x)).$
\item[(2), (3)] Both follow from applying the grafting lemma to Theorem 2.2 and (1).
\end{enumerate}

In a fixed family of trees $\mathcal T$, let $X(n)$ be a random variable whose value is the length of a path from the root of a tree to another vertex in that tree, where the path is selected uniformly from all such paths in trees on $n$ vertices. Similarly, let $X_l(n)$ be the length of a uniformly selected path from the root to a leaf, $X_{v, v}(n)$ be for vertical paths where there are no other restrictions on the endpoints, and let $X_{v, l}(n)$ be for vertical paths where there is no restriction on the upper vertex, but the lower must be a leaf.

\begin{theorem}
Let $\mathcal{T}$ be a graftable family of trees which satisfy (2.2) and with only one tree on a single vertex. Let $T(x)$ be the generating function for the number of trees on $n$ vertices, with radius of convergence $\rho$, and let $T(x)$ have expansion
 $$T(x)= a_0 - a_{1} \sqrt{1- \frac{x}{\rho}} + a_2 \left(1-\frac{x}{\rho} \right) +\bigo{\left(1- \frac{x}{\rho}\right)^{3/2}}$$
 
about $x = \rho$ (it is proven in \cite{flajoletsedgewick} that such an expansion exists, is computable using only $\phi$, and includes only terms of the form $(1-\rho x)^{k/2}$ for $k \in \mathbb Z, k \geq 0$).
Then the $k$th moment of the lengths of paths from the root is

\begin{align*}\mathbb{E}[X(n)^k]&=n^{k/2} \left(\left(\frac{a_1}{a_0}\right)^k \Gamma \left(\frac{k}{2}+1\right)- \right. \\
 &  \  \left. \frac{a_1^{k-1} \left((k+1) a_0^2+2 a_2 (k+1)a_0-a_1^2 k\right) k(k-1) \Gamma\left(\frac{k-1}{2} \right)}{4 a_0^{k+1}\sqrt{n}}\right)+\bigo{n^{k/2-1}},
\end{align*}
the $k$th moment of the length of paths from the root to leaves is
$$\mathbb{E}[X_l(n)^k]=\mathbb{E}[X^k] + n^{(k-1)/2} \left (\frac{a_1}{a_0} \right)^{k+1} \frac{k(k-1)}{4} \Gamma \left( \frac{k-1}{2} \right) + \bigo{n^{k/2-1}},$$
the $k$th moment of the length of arbitrary vertical paths is 
\begin{align*}
\mathbb{E}[X_v(n)^k] &=n^{k/2} \left(\frac{a_1^k \Gamma
   \left(\frac{k+1}{2}\right)}{a_o^k\sqrt{\pi }} \right. \\
& \ - \frac{a_1^{k-1}}{a_0^{k+1}\pi \sqrt{n}} \left(\left((k+1) a_0^2+2 a_2 (k+2) a_0-a_1^2 (k+1)\right) \sqrt{\pi } \Gamma
   \left(\frac{k}{2}+1\right) \right .\\
& \ \left. +\left(a_1^2-2 a_0 (a_0+2 a_2)\right) \Gamma
   \left(\frac{k+1}{2}\right) \vphantom{\frac{\left(\frac{a_1}{a_0}\right)^k \Gamma
   \left(\frac{k+1}{2}\right)}{\sqrt{\pi }}}\right)  + \bigo{n^{k/2-1}},\\ 
\end{align*}
and the $k$th moment of the length of vertical paths which end at a leaf is
\begin{align*}
\mathbb{E}[X_{v, l}(n)^k] &= \mathbb{E}[X_v(n)^k] + n^{(k-1)/2} \left(\frac{\left(\frac{a_1}{a_0}\right)^{k+1} \left(\sqrt{\pi } \Gamma
   \left(\frac{k}{2}+1\right)-\Gamma \left(\frac{k+1}{2}\right)\right) }{\pi
   }\right) \\
   & \ \ + \bigo{n^{k/2-1}}.
\end{align*}
\end{theorem}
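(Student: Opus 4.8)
The plan is to realize each of the four moments as a ratio of two coefficient sequences of generating functions produced by Theorem 2.2 and Corollary 2.3, and then to extract its asymptotics by singularity analysis. Writing $N_k(x)$ for the numerator generating function that weights the relevant paths by the $k$-th power of their length and $N_0(x)$ for the same function at $k=0$ (which merely counts those paths), we have $\mathbb{E}[X(n)^k]=[x^n]N_k(x)/[x^n]N_0(x)$, and likewise for $X_l$, $X_v$, $X_{v,l}$. By Corollary 2.3 every such $N_k$ equals $P_k\!\left(\tfrac{L(x)}{x}\right)h(x)$ for one of $h\in\{\,V-T,\ L-1,\ \tfrac{L}{x}(V-T),\ \tfrac{L}{x}(L-1)\,\}$, where we use that there is a unique tree on one vertex so that $[x^1]T(x)=1$; the matching denominator is $[x^n]h(x)$.

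First I would compute the singular expansions at $x=\rho$. Writing $u=\sqrt{1-x/\rho}$, the hypothesis $T=a_0-a_1u+a_2u^2+\cdots$ differentiates term by term (the scale $\{u^j\}_{j\in\mathbb{Z}}$ is preserved by $\tfrac{d}{dx}$ up to the factor $\tfrac{-1}{2\rho u}$), which gives $V=xT'=\tfrac{a_1}{2}u^{-1}-a_2+O(u)$ and hence $V-T=\tfrac{a_1}{2}u^{-1}-(a_0+a_2)+O(u)$; since $L(x)=\tfrac{x^2T'(x)}{T(x)}$ we have $\tfrac{L}{x}=\tfrac{V}{T}$, and dividing the $V$-series by the $T$-series (legitimate because $T(\rho)=a_0\neq0$) gives $\tfrac{L}{x}=\tfrac{a_1}{2a_0}u^{-1}+\tfrac{a_1^2-2a_0a_2}{2a_0^2}+O(u)$ and thus the expansions of $L-1$, $\tfrac{L}{x}(V-T)$, and $\tfrac{L}{x}(L-1)$ to two terms past their leading singular terms. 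Substituting into $P_k(y)=k!\,y^k-\tfrac{k!(k-1)}{2}y^{k-1}+Q_k(y)$ with $y=\tfrac{L}{x}\sim\tfrac{a_1}{2a_0}u^{-1}$ and $\deg Q_k\le k-2$: the contribution $Q_k(L/x)\,h(x)$ is of strictly lower singular order than the two terms we must keep, so after division by $[x^n]N_0$ it lands in the stated error $O(n^{k/2-1})$; hence only the explicit $y^k$ and $y^{k-1}$ coefficients of $P_k$ enter, and these pin down the top two singular coefficients of each $N_k$.

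Next I would invoke the transfer theorem of \cite{flajoletsedgewick}. In the aperiodic case treated there, $T$ — hence $V=xT'$, $L$, and all the rational combinations above, since $T(\rho)=a_0>0$ — is analytic in a $\Delta$-domain at $\rho$, so $[x^n](1-x/\rho)^{-\alpha}=\rho^{-n}\!\left(\tfrac{n^{\alpha-1}}{\Gamma(\alpha)}+O(n^{\alpha-2})\right)$ applies term by term. For $X$ and $X_l$, the denominator $h$ has singular exponent $\tfrac12$ and its next term is only of relative order $n^{-1}$, so the single leading singular coefficient of $N_0$ together with the top two of $N_k$ determine the first two asymptotic orders, all other contributions (transfer corrections and the $Q_k$-remainder) being $O(n^{k/2-1})$. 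For $X_v$ and $X_{v,l}$, the denominator instead has a pole-type leading term $(1-x/\rho)^{-1}$, so its subleading $(1-x/\rho)^{-1/2}$ coefficient is of relative order $n^{-1/2}$ and must be retained; this is precisely what produces the extra $\Gamma$-term (the one with $\Gamma(\tfrac{k+1}{2})$ alongside $\Gamma(\tfrac k2+1)$) in the $\mathbb{E}[X_v(n)^k]$ and $\mathbb{E}[X_{v,l}(n)^k]$ formulas. Forming the ratios expresses the four moments in terms of $a_0,a_1,a_2,k$ and the numbers $\Gamma(\tfrac{k+1}{2})$, $\Gamma(\tfrac k2)$, $\Gamma(\tfrac k2+1)$.

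The final step is Gamma-function simplification: the Legendre duplication formula $\Gamma(z)\Gamma(z+\tfrac12)=2^{1-2z}\sqrt{\pi}\,\Gamma(2z)$ with $\Gamma(z+1)=z\Gamma(z)$ collapses the combinations that appear — for instance $\tfrac{\sqrt{\pi}\,k!}{2^k\Gamma((k+1)/2)}=\Gamma(\tfrac k2+1)$ and $\tfrac{k!}{\Gamma(k/2)}=\tfrac{k(k-1)2^{k-2}}{\sqrt{\pi}}\Gamma(\tfrac{k-1}{2})$ — into the stated closed forms; the displayed identities for $\mathbb{E}[X_l(n)^k]$ and $\mathbb{E}[X_{v,l}(n)^k]$ then follow by subtracting $\mathbb{E}[X(n)^k]$, resp.\ $\mathbb{E}[X_v(n)^k]$, whose leading singular terms agree. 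I expect the main obstacle to be the error-free bookkeeping: carrying the three second-order expansion coefficients ($\tfrac{a_1^2-2a_0a_2}{2a_0^2}$, the $u^0$-coefficient $-(a_0+a_2)$ of $V-T$, and their product-rule interactions) through the multiplication by $P_k(L/x)$ for all four choices of $h$, and then matching term for term the somewhat intricate second-order coefficients in the displayed formulas. A secondary point needing care is justifying $\Delta$-analyticity, and stating that the periodic case is excluded or handled separately, so that the term-by-term transfer is legitimate.
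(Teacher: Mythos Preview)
Your proposal is correct and follows essentially the same route as the paper: compute the singular expansions of $V$, $L$, and $L/x$ from that of $T$, substitute into the expressions from Theorem~2.2 and Corollary~2.3 keeping only the two leading $P_k$-terms, apply singularity analysis, and form the ratio $[x^n]N_k/[x^n]N_0$. The paper carries this out explicitly only for $D_k$ and says ``the other calculations are very similar''; you are more careful than the paper in flagging the Legendre-duplication step that converts $k!/\Gamma(\tfrac{k+1}{2})$ into the displayed $\Gamma(\tfrac{k}{2}+1)$ form, and in noting that for $X_v$ and $X_{v,l}$ the denominator's subleading $(1-x/\rho)^{-1/2}$ term must be retained because it is of relative order $n^{-1/2}$ rather than $n^{-1}$.
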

\begin{proof} We present here the calculation for $D_k(x)$, the other calculations are very similar. Using the fact that $V(x)= x T ^\prime (x)$ and that $L(x)= \frac{x V(x)}{T(x)}$, this gives:
$$V(x) = \frac{a_1}{2 \sqrt{1-\frac{x}{\rho}}}-a_2+\bigo{\sqrt{1-\frac{x}{\rho}}},$$
$$L(x)=\frac{a_1 \rho}{2 a_0 \sqrt{1-\frac{x}{\rho}}}+\frac{\rho \left(a_1^2-2 a_0 a_2\right)}{2a_0^2}+\bigo{\sqrt{1-\frac{x}{\rho}}},$$
$$\frac{L(x)}{x} = \frac{a_1}{2 a_0 \sqrt{1- \frac{x}{\rho}}}+\frac{\left(a_1^2-2 a_0 a_2\right)}{2a_0^2}+\bigo{\sqrt{1- \frac{x}{\rho}}}.$$
Since

\begin{align}
\left(\frac{L(x)}{x} \right)^k  (V(x)-T(x)) = &\frac{a_1^{k+1}}{2^{k+1}a_0^k \left(1-\frac{x}{\rho }\right)^{\frac{1}{2} (k+1)}} \\
 \nonumber & \    +\frac{a_1^k  \left(a_1^2 k - 2a_0^2 -2 a_0 a_2 (k+1)\right)}{ 2^{k+1}a_0^{k+1}  \left(1-\frac{x}{\rho }\right)^{k/2} } \\
 \nonumber & \ + \bigo{\left(1-\frac{x}{\rho} \right)^{-k/2}}, 
\end{align}

plugging (2.5) into Theorem 2.2 yields that

\begin{align*}
D_k(x) &= \frac{a_1^{k+1}k!}{2^{k+1}a_0^k \left( 1 - \frac x \rho \right)^{\frac{k}{2}+1}} -\frac{   a_1^k k!\left( a_0^2(k+1)+2a_0 a_2 (k+1)-a_1^2 k\right)}{2^{k+1} a_0^{k+1} \left(1-\frac{x}{\rho}\right)^{\frac{k+1}{2} }}\\
\nonumber & \ + \bigo{\left(1-\frac{x}{\rho} \right)^{-k/2}}.
\end{align*}

Applying singularity analysis to $D_k(x)$ gives

\begin{align*}
[x^n]D_k(x) = & \rho^n \left( \frac{a_1^{k+1} k! }{2^{k+1} a_0^k\Gamma\left(\frac{k+1}{2}\right)}n^{\frac{k-1}{2}} -\frac{a_1^k  k! \left(a_0^2 (k+1)+2 a_0 a_2 (k+1)-a_1^2 k\right)}{ 2^{k+1} a_0^{k+1}\Gamma \left(\frac{k}{2}\right)}  n^{\frac{k}{2}-1} \right) \\
+& \bigo{n^{\frac{k-3}{2}}\rho^n}.
\end{align*}

\begin{align*}
[x^n]D_0(x)=\rho^n \left( \frac{a_1}{2 \sqrt{\pi}\sqrt{n}} -  \frac{1+2a_0 a_2}{2 a_0} \right)+ \bigo{\sqrt{n} \rho^n }
\end{align*}

Since $\mathbb{E}[X^k] = \frac{[x^n]D_k(x)}{[x^n]D_0(x)},$ the result follows. 
\end{proof}

\begin{example}
Let $\mathcal{C}$ be the family of Cayley Trees, labeled trees where the order of the children do not matter. In this case, since the generating functions are exponential, we need there to be a correspondence between an ordered triplet of a tree with a marked leaf, a tree, and a subset of the labels which corresponds to the labels used in the subtree. This is indeed still a bijection, so $\mathcal C$ is graftable. We have that $T_\mathcal{C}(x)= x e^{T_\mathcal{C}(x)},$ so $\mathcal C$ satisfies (2.2). This relation does not yield a closed form solution, but it is shown in \cite{flajoletsedgewick} that it has a singular expansion about its dominant singularity with $T_\mathcal{C}(x) = 1 - \sqrt{2}\sqrt{1- e x}+ \frac 2 3 (1-ex) + \bigo{(1-ex)^{3/2}}$. Thus, the $k$th moment of the lengths of vertical paths which end in leaves in $\mathcal{C}_n$ is
\begin{align*}
\mathbb{E}[X_{v, l}(n)^k] &= n^{k/2} \left(\frac{2^{k/2} \Gamma \left(\frac{k+1}{2}\right)}{\sqrt{\pi }} \right. \\
& \ \ \left. -\frac{\left(2^{\frac{k-1}{2}}   \left((k-1) \sqrt{\pi } \Gamma \left(\frac{k}{2}+1\right)-2 \Gamma   \left(\frac{k+1}{2}\right)\right)\right) }{3 \pi \sqrt{n}}+O\left(\frac{1}{n}\right)\right).
\end{align*}
More specifically, the expected length of such a path is
$$\mathbb{E}[X_{v, l}(n)] = \sqrt{\frac{2n}{\pi}} - \frac 2 {3\pi} + \bigo{\frac{1}{\sqrt{n}}}.$$
\end{example}





\section{Arbitrary Paths in Specific Tree Structures}
In some cases, the generating functions for vertical paths turn out be very simple, which makes it possible to calculate the expected length explicitly. In the case of general trees, it is also possible to find corresondences between vertical paths and general paths. We also present the case of binary trees to show that there is not always so strong a relationship. Further, the case of binary trees shows that it is not true for general paths that if one insists that a path has leaves as endpoints, it does not appreciably add to the expected length.

\subsection{Downward Paths in General Trees}
Let $\mathcal G$ be the class of general. Then the generating functions for trees, vertices, and leaves, respectively, are

\begin{equation*}
T_\mathcal{G}(x)=\frac{1-\sqrt{1-4x}}{2}, \quad V_\mathcal{G}(x)=\frac{x}{\sqrt{1-4x}}, \quad \text{and} \quad L_\mathcal{G}(x)=\frac{x}{2\sqrt{1-4x}}+\frac{x}{2}.
\end{equation*}

By Corollary 2.3 (1), with $k=1$, the generating function for the number of edges in paths from the root to a leaf in trees on $n$ vertices is

\begin{equation*}
\left(\frac{1}{2\sqrt{1-4x}}+\frac{1}{2} \right) \left(\frac{x}{2\sqrt{1-4x}}+\frac{x}{2}-x \right)=\dfrac{x^2}{1-4x}.
\end{equation*}

Undergraduate calculus shows that the number of edges in such paths is $4^{n-2}$. Since the number of such paths is clearly equal to the number of leaves, $\frac {1}{2} {2n-2 \choose n-1},$ \linebreak for $n \geq 2$ (each leaf has a unique path to the root), it follows that the expected length of such a path is

\begin{equation}
\frac{4^{n-2}}{\frac {1}{2} {2n-2 \choose n-1}}=\frac{4^{n-2}}{\frac{4^{n-1} (2n-3)!!}{2(2n-2)!!}}=\frac{(2n-2)!!}{2(2n-3)!!} \approx \frac{\sqrt{\pi n}}{2} + \bigo{\dfrac{1}{\sqrt{n}}}.
\end{equation}
A better asymptotic approximation for this number is given in the appendix. Since the expected distance to the closest leaf is approximately $1.62297 + \bigo{\dfrac{1}{n}}$ \cite{heuberger_prodinger_2017}, and the furthest leaf is $\sqrt{\pi n} - \frac{1}{2} + \bigo{\dfrac{1}{n}}$ \cite{de_bruijn_knuth_rice_1972}, the difference between the expected distance from the root to a leaf and the average of the distances to the closest and furthest leaves is only about $0.561486$ in an arbitrarily large tree.

By Corollary 2.3 (2), the generating function for the number of edges in downward paths in general trees is 
\begin{equation*}
\left(\frac{1}{2\sqrt{1-4x}} +\frac{1}{2}\right)^2 \left( \frac{x}{\sqrt{1-4x}}- \frac{1-\sqrt{1-4x}}{2}\right)=\frac{x^2}{(1-4x)^{3/2}},
\end{equation*}
and the generating function for the number of such paths is
\begin{equation*}
\left(\frac{1}{2\sqrt{1-4x}} +\frac{1}{2}\right) \left( \frac{x}{\sqrt{1-4x}}- \frac{1-\sqrt{1-4x}}{2}\right)=\frac{x}{2 (1-4 x)}-\frac{x}{2 \sqrt{1-4 x}}.
\end{equation*}
Both of these generating functions are straightforward enough to extract exact coefficients; 
\begin{align*}
[x^n] \frac{x^2}{(1-4x)^{3/2}}&=\frac{(2n-3)!}{(n-2)!^2}, \\
[x^n] \left( \frac{x}{2(1-4x)}-\frac{x}{2\sqrt{1-4x}} \right)&=\frac{1}{2}\left(4^{n-1} - {2n-2 \choose n-1} \right).
\end{align*}
 The expected length of a downward path in a general tree on $n$ vertices is the ratio of the two,

\begin{equation*}
\frac{\frac{(2n-3)!}{((n-2)!^2)}}{\frac{1}{2}\left(4^{n-1} - {2n-2 \choose n-1} \right)}=\frac{n-1}{\frac{(2n-2)!!}{(2n-3)!!}-1}.
\end{equation*}

A careful reader may note that the number of edges in downward paths is equal to ${n \choose 2} c_{n-1}$, where $c_{n-1}=\dfrac{1}{n} \displaystyle {2n-2 \choose n-1}$, the $(n-1)$st Catalan number, which is the number of trees on $n$ vertices. There is also the slightly more obvious fact that the generating function is one half times the second derivative of the generating function for trees. This is also the number of all paths, since one can choose any two endpoints and uniquely determine a path. These generating function arguments prove that their number is the same, but the fact invites a bijective proof.


\subsection{Arbitrary Paths in General Trees}
For this section, it will be useful to repeatedly refer to the edge between the root and its leftmost child. Thus, we will refer to this edge as the ``key edge.''
\begin{prop}
In general trees on $n$ vertices, the number of paths is equal to the number of edges in vertical paths.
\end{prop}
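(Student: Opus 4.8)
The plan is to give a bijective proof, organized around the key edge and carried out by induction on $n$. Let $\mathcal{P}_n$ be the set of pairs $(T,\{x,y\})$ with $T$ a general tree on $n$ vertices and $\{x,y\}$ an unordered pair of distinct vertices of $T$, so $|\mathcal{P}_n|$ is the number of paths; let $\mathcal{E}_n$ be the set of triples $(T,Q,e)$ with $T$ a general tree on $n$ vertices, $Q$ a vertical path of $T$, and $e$ an edge of $Q$, so $|\mathcal{E}_n|$ is the number of edges in vertical paths. For $n=1$ both sets are empty. For $n\ge 2$, cutting the key edge of $T$ (joining the root $r$ to its leftmost child $\ell$) splits $T$ into an ordered pair $(A,B)$, where $A$ is the subtree rooted at $\ell$ and $B$ is what remains, rooted at $r$; here $|A|,|B|\ge 1$ and $|A|+|B|=n$. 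This is the familiar decomposition behind $T_{\mathcal{G}}=x+T_{\mathcal{G}}^{2}$.

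First I would record how the two counts split under this decomposition. A path of $T$ lies entirely in $A$, lies entirely in $B$, or has one endpoint in each; a path of the last kind must use the key edge and so is determined by its pair of endpoints in $A\times B$, contributing $|A|\,|B|$ of them, so the number of paths of $T$ equals (paths of $A$) $+$ (paths of $B$) $+\,|A|\,|B|$. Similarly a vertical path of $T$ lies in $A$, lies in $B$, or uses the key edge; one of the last kind must begin at $r$ and run $r\to\ell\to\cdots\to v$ for a vertex $v$ of $A$, and it contributes $1+d(v,r(A))$ edges, one for each vertex $w$ on the path in $A$ from $r(A)$ down to $v$ — call such a $w$ a \emph{weak ancestor} of $v$. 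Hence the number of edges in vertical paths of $T$ equals (edges in vertical paths of $A$) $+$ (edges in vertical paths of $B$) $+\sum_{v\in A}\bigl(1+d(v,r(A))\bigr)$.

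Summing both of these identities over all general trees on $n$ vertices and applying the inductive hypothesis — which is the equality $|\mathcal{P}_a|=|\mathcal{E}_a|$ of the two \emph{totals} for $a<n$, there being no tree-by-tree equality — the $A$- and $B$-terms cancel, and the statement reduces to the identity
$$\sum_{|A|+|B|=n}|A|\,|B|\;=\;\sum_{|A|+|B|=n}\ \sum_{v\in A}\bigl(1+d(v,r(A))\bigr),$$
that is: the number of quadruples (tree $A$, marked vertex of $A$, tree $B$, marked vertex of $B$) with $|A|+|B|=n$ equals the number of quadruples (tree $A$, vertex $v$ of $A$, weak ancestor $w$ of $v$, tree $B$) with $|A|+|B|=n$. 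The crux is to prove this last identity bijectively, using the Grafting Lemma. Given $(A,v,w,B)$, un-graft $A$ at $w$: replace the subtree of $A$ rooted at $w$ by a single marked leaf $\lambda$, producing a tree $A^{-}$ with marked leaf $\lambda$ and a detached subtree $S$ that still contains $v$. Now graft $B$ onto $A^{-}$ at $\lambda$; by the Grafting Lemma this yields a tree $A^{\ast}$ carrying a distinguished vertex, namely the inserted copy of $r(B)$. Output the ordered pair of pointed trees $\bigl((A^{\ast},\text{that vertex}),(S,v)\bigr)$, whose sizes add to $n$. This reverses: from an ordered pair of pointed trees, un-graft the first at its marked vertex (recovering $B$ as the subtree there and a tree with a marked leaf), then graft the second pointed tree into that leaf; the root of the grafted copy becomes $w$, its distinguished interior vertex is $v$, and since $v$ lies in the subtree rooted at $w$ this $w$ is a weak ancestor of $v$, so $(A,v,w,B)$ is recovered. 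Unwinding the induction then gives a single explicit bijection $\mathcal{P}_n\to\mathcal{E}_n$: within-$A$ and within-$B$ paths are re-routed recursively and crossing paths are pushed through this grafting map.

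The main obstacle is not a computation but the bookkeeping in the grafting step — one must verify that un-grafting and grafting are genuinely mutually inverse here, in particular that the marked leaf produced by un-grafting at $w$ is the \emph{only} place $B$ (respectively $S$) gets reinserted — so that the ``crossing'' pieces of $\mathcal{P}_n$ and $\mathcal{E}_n$ are matched exactly, rather than merely shown equinumerous.
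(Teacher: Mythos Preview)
Your proof is correct, and it takes a genuinely different route from the paper's. The paper gives a direct, one-shot bijection from paths to marked edges in vertical paths: a vertical path is sent to itself with its top edge marked; for a non-vertical path one looks at its apex, cuts off the subtree hanging from the right-hand edge of the path there (together with all siblings further right), and re-grafts that subtree at the bottom of the remaining left branch, producing a vertical path with one interior edge marked. Your argument instead proceeds by induction along the key-edge decomposition $T_{\mathcal G}=x+T_{\mathcal G}^{2}$, cancels the within-$A$ and within-$B$ contributions via the hypothesis, and reduces everything to a clean ``crossing'' identity between ordered pairs of pointed trees and quadruples (tree with a vertex and a weak ancestor of it, tree), which you dispatch with a single graft/un-graft swap. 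The paper's surgery is shorter and leans in an essential way on the left-to-right sibling order of general trees; your approach is more modular --- it isolates exactly where the Grafting Lemma does the work --- and, because your crossing bijection is literally ``swap the subtree at $w$ for the tree $B$,'' its invertibility is arguably easier to check than the paper's reattachment move.
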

\begin{proof}
We construct a bijection between the set of marked edges in marked vertical paths to marked paths. Note that this bijection will not, in general, map into the same tree.

Take a path. If the path is vertical send it to the same path and mark the top edge. Else, mark the edge in the path on the right of the vertex closest to the root, and remove this edge and all of its siblings to its right. This gives a subtree where there is path through the key edge, and the key edge is marked. Graft this subtree onto the end of the left-hand path, to the right of the existing children; this gives a vertical path with a single marked edge.
\end{proof}

\begin{figure}[ht]
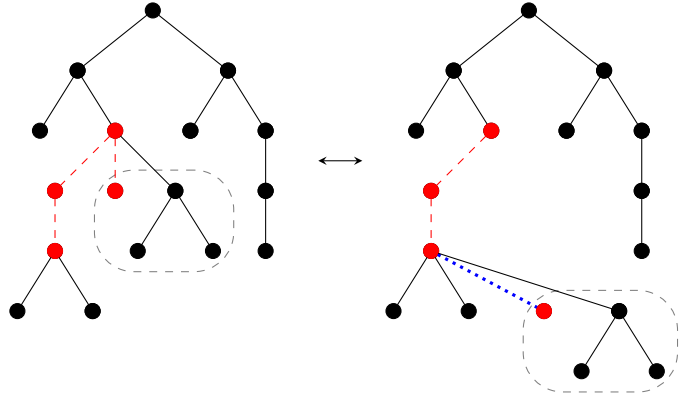

\begin{istgame}
\setistSolidNodeStyle{.2cm}
\xtdistance{8mm}{10mm}
\xtShowEndPoints
\istroot(A01)(0,0)
\istb
\istb <missing>
\istb
\endist
\istroot(A11)(A01-1)
\istb
\istb
\endist
\istroot(A12)(A01-3)
\istb
\istb
\endist
\xtdistance{8mm}{8mm}
\istroot(A22)(A11-2)[red]
\istb[red, dashed]
\istb[red, dashed]
\istb
\endist
\xtdistance{8mm}{10mm}
\istroot(A23)(A12-1)
\endist
\istroot(A24)(A12-2)
\istb
\endist
\istroot(A31)(A22-1)[red]
\istb[red, dashed]
\endist
\istroot(A32)(A22-2)[red]
\endist
\istroot(A33)(A22-3)
\istb
\istb
\endist
\istroot(A34)(A24-1)
\istb
\endist
\istroot(A41)(A31-1)[red]
\istb
\istb
\endist
\istroot(A52)(A41-2)
\endist
\xtSubgameBox(A32){(A32)(A33)(A33-1)(A33-2)}[inner sep = 5]
\draw[black, <->,
    shorten >=0.4pt]
    (2.2,-2) -- (2.8,-2);
\istroot(B01)(5,0)
\istb
\istb <missing>
\istb
\endist
\istroot(B11)(B01-1)
\istb
\istb
\endist
\istroot(B12)(B01-3)
\istb
\istb
\endist
\xtdistance{8mm}{8mm}
\istroot(B22)(B11-2)[red]
\istb[red, dashed]
\istb <missing>
\istb <missing>
\endist
\xtdistance{8mm}{10mm}
\istroot(B23)(B12-1)
\endist
\istroot(B24)(B12-2)
\istb
\endist
\istroot(B31)(B22-1)[red]
\istb[red, dashed]
\endist
\istroot(B34)(B24-1)
\istb
\endist
\istroot(B41)(B31-1)[red]
\istb<missing>
\istb<missing>
\istb
\istb
\istb[blue, dotted, very thick]
\istb
\endist
\istroot(B53)(B41-5)[red]
\endist
\istroot(B54)(B41-6)
\istb
\istb
\endist
\xtSubgameBox(B53){(B53)(B54)(B54-1)(B54-2)}[inner sep=5]
\end{istgame}
\caption{An example of the bijection used in Proposition 3.1.}
\end{figure}

\begin{prop}
In all general trees on $n$ vertices, the number of edges in all paths which include the key edge is ${n \choose 2} c_{n-1}$.
\end{prop}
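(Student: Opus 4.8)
The plan is to compute directly the generating function $E(x)=\sum_{n\ge 1}e_nx^n$, where $e_n$ is the number of edges in all paths containing the key edge, summed over all general trees on $n$ vertices, and then to read off its coefficients. A path containing the key edge has at least one edge, so $e_1=0$ and I may assume $n\ge 2$. I will write $T_\mathcal{G},V_\mathcal{G},L_\mathcal{G}$ for the generating functions of Section 3.1 and set $D_1(x)=\bigl(V_\mathcal{G}(x)-T_\mathcal{G}(x)\bigr)\frac{L_\mathcal{G}(x)}{x}$, the path-length generating function given by Theorem 2.2 with $k=1$; explicitly $V_\mathcal{G}(x)=\frac{x}{\sqrt{1-4x}}$ and $D_1(x)=\frac{x}{2(1-4x)}-\frac{x}{2\sqrt{1-4x}}$.

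First I would decompose the trees. A general tree $T$ on $n\ge 2$ vertices is determined uniquely by the ordered pair $(R,S)$, where $S$ is the subtree rooted at the leftmost child $u$ of the root $r$, and $R=T\setminus S$ is the root together with its remaining children; both are general trees with $|R|+|S|=n$, and the inverse operation reattaches $S$ as the leftmost subtree of the root of $R$. (This is the combinatorial content of the identity $T_\mathcal{G}^2=T_\mathcal{G}-x$.)

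Next I would decompose the paths. A path through the key edge $ru$ restricts on $R$ to a vertical path starting at $r$ and on $S$ to a vertical path starting at $u$ (either of which may be the trivial one-vertex path), the two being joined by the key edge; conversely each such pair of vertical paths glues back to a unique path through the key edge, of length $\ell(P_R)+1+\ell(P_S)$. Since the vertical paths starting at the root of a general tree biject with its vertices (send a path to its lower endpoint), summing the lengths of all paths through the key edge in $T=(R,S)$ yields
\[
|S|\cdot p(R)\;+\;|R|\cdot|S|\;+\;|R|\cdot p(S),
\]
where $p(\cdot)$ denotes path length. Because $|T|=|R|+|S|$, summing this over all trees on $n\ge 2$ vertices and passing to generating functions turns each of the three terms into a product of a factor for $R$ and a factor for $S$, giving
\[
E(x)=2\,V_\mathcal{G}(x)\,D_1(x)+V_\mathcal{G}(x)^2.
\]
Substituting the formulas above and simplifying (routine algebra in $\sqrt{1-4x}$) gives $E(x)=\dfrac{x^2}{(1-4x)^{3/2}}$, whose coefficient of $x^n$ is $\dfrac{(2n-3)!}{(n-2)!^2}=\binom{n}{2}c_{n-1}$, exactly as recorded in Section 3.1.

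The only point that needs care is the path decomposition: I must check that it genuinely is a bijection, and that the length identity $\ell(P_R)+1+\ell(P_S)$ remains valid, in the degenerate cases where the path starts at $r$ (so $P_R$ is trivial) or ends at $u$ (so $P_S$ is trivial); everything else is a short calculation. A bijective alternative would be to produce a length-preserving matching between marked edges of paths through the key edge and marked edges of downward paths, which would reduce the proposition to the edge count for downward paths already obtained in Section 3.1, but the generating-function route above is shorter.
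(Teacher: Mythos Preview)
Your proof is correct and takes a genuinely different route from the paper's. The paper proves Proposition~3.2 by exhibiting an explicit four-case bijection from the set of \emph{all} paths in general trees on $n$ vertices (which number $\binom{n}{2}c_{n-1}$ for the trivial reason that a path is a choice of two endpoints in one of $c_{n-1}$ trees) to the set of marked edges in paths through the key edge; the bijection is case-split according to whether the source path is vertical and whether it lies under the key edge, and in the non-vertical cases it involves cutting off a subtree and regrafting it elsewhere. Your argument instead splits a tree at the key edge as $(R,S)$, observes that a path through the key edge is exactly a choice of $v_1\in R$ and $v_2\in S$ with length $d_R(r,v_1)+1+d_S(u,v_2)$, and then sums via the product formula to obtain $E(x)=2V_\mathcal{G}(x)D_1(x)+V_\mathcal{G}(x)^2=\dfrac{x^2}{(1-4x)^{3/2}}$. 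This is shorter and entirely mechanical once Theorem~2.2 is available. The paper's bijection, on the other hand, buys more than the bare count: it is reused in Proposition~3.4, where examining the four cases shows that leaf-to-leaf paths are carried to marked edges of leaf-to-leaf key-edge paths (up to an explicitly identified defect), yielding the leaf-to-leaf edge count with no further work. Your remark at the end about a ``length-preserving matching'' is slightly off the mark: the paper's bijection is between \emph{paths} and \emph{marked edges in key-edge paths}, not between two sets of marked edges.
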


\begin{proof}
We establish a bijection between all paths in trees on $n$ vertices to edges in paths which pass through the key edge. We will group all paths into one of four cases. There is overlap between cases 1 and 2, but the mappings agree.

\begin{enumerate}
\item[Case 1] Paths which contain the key edge: for any such path, map it to the same path in the same tree, and, within that path, mark the key edge. \\

\begin{figure}[ht]
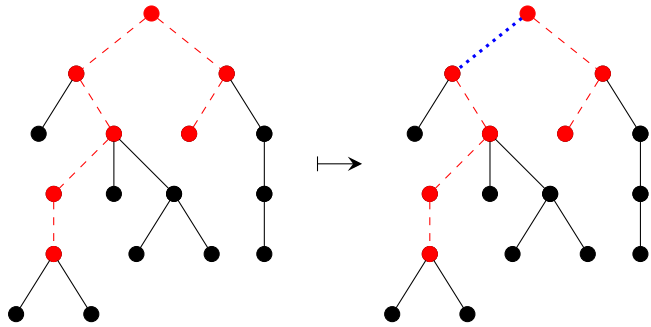

\begin{istgame}
\setistSolidNodeStyle{.2cm}
\xtdistance{8mm}{10mm}
\xtShowEndPoints
\istroot(10)(0,0)[red] 
\istb[red, dashed]
\istb<missing> 
\istb[red, dashed]
\endist 
\istroot(11)(10-1)[red]
\istb
\istb[red, dashed]
\endist
\istroot(12)(10-3)[red]
\istb[red, dashed]
\istb
\endist
\xtdistance{8mm}{8mm}
\istroot(13)(11-2)[red]
\istb[red, dashed] \istb \istb
\endist
\xtdistance{8mm}{10mm}
\istroot(14)(12-2)
\istb 
\endist
\istroot(15)(13-1)[red]
\istb[red, dashed]
\endist
\istroot(16)(13-3)
\istb
\istb
\endist
\istroot(17)(14-1)
\istb
\endist
\istroot(18)(15-1)[red]
\istb
\istb
\endist
\istroot(19)(12-1)[red]
\endist

\draw[black, |->,decoration={markings,mark=at position 1 with {\arrow[scale=1.7,black]{>}}},
    postaction={decorate},
    shorten >=0.4pt]
    (2.2,-2) -- (2.8,-2);

\istroot(20)(5,0)[red] 
\istb [blue, dotted, very thick]
\istb<missing> 
\istb[red, dashed]
\endist 
\istroot(21)(20-1)[red]
\istb
\istb[red, dashed]
\endist
\istroot(22)(20-3)[red]
\istb[red, dashed]
\istb
\endist
\xtdistance{8mm}{8mm}
\istroot(23)(21-2)[red]
\istb[red, dashed] \istb \istb
\endist
\xtdistance{8mm}{10mm}
\istroot(24)(22-2)
\istb 
\endist
\istroot(25)(23-1)[red]
\istb[red, dashed]
\endist
\istroot(26)(23-3)
\istb
\istb
\endist
\istroot(27)(24-1)
\istb
\endist
\istroot(28)(25-1)[red]
\istb
\istb
\endist
\istroot(29)(22-1)[red]
\endist
\end{istgame}
\caption{An example of case 1. The dotted edge in the dashed path on the right will be referred to as the \textit{key edge}.}
\end{figure}

\item[Case 2] Vertical paths: for any such path, map it to the unique path which passes from the key edge to the bottom edge of this path. This path passes through the root if the downward path is not contained in the subtree beneath the key edge. Next, mark the edge which was the edge of the original path which was closest to the root.\\

\begin{figure}[ht]
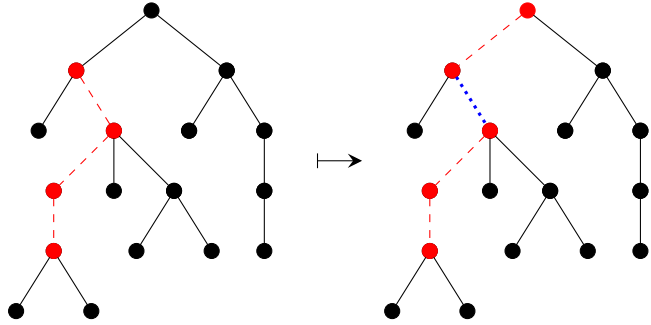

\begin{istgame}
\setistSolidNodeStyle{.2cm}
\xtdistance{8mm}{10mm}
\xtShowEndPoints
\istroot(10)(0,0) 
\istb 
\istb<missing> 
\istb 
\endist 
\istroot(11)(10-1)[red]
\istb
\istb[red, dashed]
\endist
\istroot(12)(10-3)
\istb
\istb
\endist
\xtdistance{8mm}{8mm}
\istroot(13)(11-2)[red]
\istb[red, dashed] \istb \istb
\endist
\xtdistance{8mm}{10mm}
\istroot(14)(12-2)
\istb 
\endist
\istroot(15)(13-1)[red]
\istb[red, dashed]
\endist
\istroot(16)(13-3)
\istb
\istb
\endist
\istroot(17)(14-1)
\istb
\endist
\istroot(18)(15-1)[red]
\istb
\istb
\endist
\istroot(19)(12-1)
\endist

\draw[black, |->,decoration={markings,mark=at position 1 with {\arrow[scale=1.7,black]{>}}},
    postaction={decorate},
    shorten >=0.4pt]
    (2.2,-2) -- (2.8,-2);

\istroot(20)(5,0)[red] 
\istb[red, dashed]
\istb<missing> 
\istb 
\endist 
\istroot(21)(20-1)[red]
\istb
\istb[blue, dotted, very thick]
\endist
\istroot(22)(20-3)
\istb
\istb
\endist
\xtdistance{8mm}{8mm}
\istroot(23)(21-2)[red]
\istb[red, dashed] \istb \istb
\endist
\xtdistance{8mm}{10mm}
\istroot(24)(22-2)
\istb 
\endist
\istroot(25)(23-1)[red]
\istb[red, dashed]
\endist
\istroot(26)(23-3)
\istb
\istb
\endist
\istroot(27)(24-1)
\istb
\endist
\istroot(28)(25-1)[red]
\istb
\istb
\endist
\istroot(29)(22-1)
\endist
\end{istgame}
\caption{An example of case 2.}
\end{figure}

\item[Case 3] Non-vertical paths which are contained in the subtree below the key edge: consider the vertex closest to the root; remove the subtree whose key edge is the right-hand edge of the path. This leaves two trees with vertical paths, the second with the path through its key edge (this is necessary for the reversal of the algorithm). Mark the top edge of the path in the first tree. Graft the second tree to the root of the first tree, to the right of all existing edges. Finally, connect the two paths \\

\begin{figure}[ht]
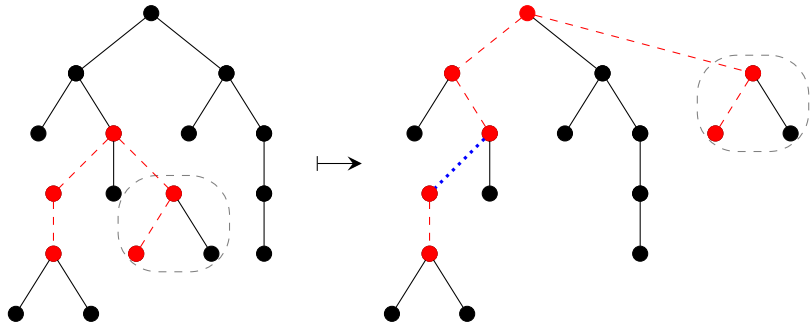

\begin{istgame}
\setistSolidNodeStyle{.2cm}
\xtdistance{8mm}{10mm}
\xtShowEndPoints
\istroot(10)(0,0) 
\istb 
\istb<missing> 
\istb 
\endist 
\istroot(11)(10-1)
\istb
\istb
\endist
\istroot(12)(10-3)
\istb
\istb
\endist
\xtdistance{8mm}{8mm}
\istroot(13)(11-2)[red]
\istb[red, dashed] \istb \istb[red, dashed]
\endist
\xtdistance{8mm}{10mm}
\istroot(14)(12-2)
\istb 
\endist
\istroot(15)(13-1)[red]
\istb[red, dashed]
\endist
\istroot(16)(13-3)[red]
\istb[red, dashed]
\istb
\endist
\istroot(17)(14-1)
\istb
\endist
\istroot(18)(15-1)[red]
\istb
\istb
\endist
\istroot(19)(16-1)[red]
\endist
\xtSubgameBox(16){(13-3)(16-2)(16-1)}[inner sep = 4]

\draw[black, |->,decoration={markings,mark=at position 1 with {\arrow[scale=1.7,black]{>}}},
    postaction={decorate},
    shorten >=0.4pt]
    (2.2,-2) -- (2.8,-2);

\istroot(20)(5,0)[red] 
\istb<missing>
\istb <missing>
\istb[red, dashed]
\istb <missing>
\istb
\istb<missing>
\istb[red, dashed]
\endist 
\istroot(21)(20-3)[red]
\istb
\istb[red, dashed]
\endist
\istroot(22)(20-5)
\istb
\istb
\endist
\xtdistance{8mm}{8mm}
\istroot(23)(21-2)[red]
\istb[blue, dotted, very thick] \istb \istb<missing>
\endist
\xtdistance{8mm}{10mm}
\istroot(24)(22-2)
\istb 
\endist
\istroot(25)(23-1)[red]
\istb[red, dashed]
\endist
\istroot(26)(20-7)[red]
\istb[red, dashed]
\istb
\endist
\istroot(27)(24-1)
\istb
\endist
\istroot(28)(25-1)[red]
\istb
\istb
\endist
\istroot(29)(26-1)[red]
\endist
\xtSubgameBox(26){(20-7)(26-1)(26-2)}[inner sep = 4]
\end{istgame}
\caption{An example of case 3.}
\end{figure}

\item[Case 4] Non-downward paths which are not contained in the subtree below the key edge: similarly to case 3, take the vertex closest to the root and remove the subtree whose key edge is the first edge of the right-hand path. Mark the top edge of the path that remains, then graft the second tree at the bottom of the key edge, to the right of existing vertices, then connect the paths. \\

\begin{figure}[ht]
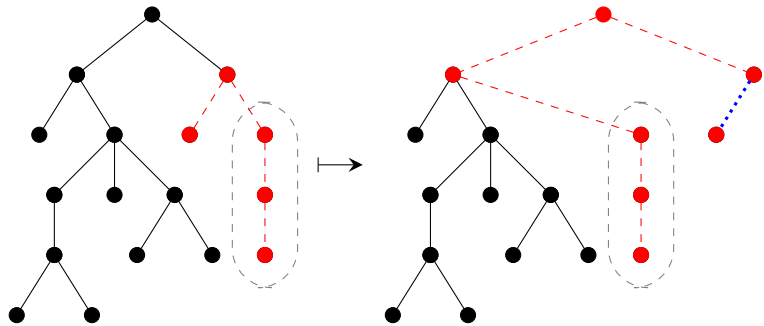

\begin{istgame}
\setistSolidNodeStyle{.2cm}
\xtdistance{8mm}{10mm}
\xtShowEndPoints
\istroot(10)(0,0) 
\istb 
\istb<missing> 
\istb 
\endist 
\istroot(11)(10-1)
\istb
\istb
\endist
\istroot(12)(10-3)[red]
\istb[red, dashed]
\istb[red, dashed]
\endist
\xtdistance{8mm}{8mm}
\istroot(13)(11-2)
\istb \istb \istb
\endist
\xtdistance{8mm}{10mm}
\istroot(14)(12-2)[red]
\istb[red, dashed]
\endist
\istroot(15)(13-1)
\istb
\endist
\istroot(16)(13-3)
\istb
\istb
\endist
\istroot(17)(14-1)[red]
\istb[red, dashed]
\endist
\istroot(18)(15-1)
\istb
\istb
\endist
\istroot(19)(12-1)[red]
\endist
\istroot(101)(17-1)[red]
\endist
\xtSubgameBox(14){(14)(101)}[inner sep = 9.5]

\draw[black, |->,decoration={markings,mark=at position 1 with {\arrow[scale=1.7,black]{>}}},
    postaction={decorate},
    shorten >=0.4pt]
    (2.2,-2) -- (2.8,-2);

\istroot(20)(6,0)[red] 
\istb[red, dashed]
\istb<missing> 
\istb <missing>
\istb <missing>
\istb[red, dashed]
\endist 
\istroot(21)(20-1)[red]
\istb <missing>
\istb <missing>
\istb
\istb
\istb<missing>
\istb[red, dashed]
\endist
\istroot(22)(20-5)[red]
\istb[blue, dotted, very thick]
\istb<missing>
\endist
\xtdistance{8mm}{8mm}
\istroot(23)(21-4)
\istb \istb \istb
\endist
\xtdistance{8mm}{10mm}
\istroot(24)(21-6)[red]
\istb[red, dashed]
\endist
\istroot(25)(23-1)
\istb
\endist
\istroot(26)(23-3)
\istb
\istb
\endist
\istroot(27)(24-1)[red]
\istb[red, dashed]
\endist
\istroot(28)(25-1)
\istb
\istb
\endist
\istroot(29)(22-1)[red]
\endist
\istroot(201)(27-1)[red]
\endist
\xtSubgameBox(24){(24)(201)}[inner sep = 9.5]
\end{istgame}
\caption{An example of case 4.}
\end{figure}

\end{enumerate}
\end{proof}

\begin{corollary}
The number of edges in all paths, in trees on $n$ vertices is \linebreak $(n-1)4^{n-2}$. Further, the expected length of a uniformly selected path in a tree of size $n$ is equal to the expected length of a uniformly selected path from the root to a leaf in trees of size $n$.
\end{corollary}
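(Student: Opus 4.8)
The plan is to prove the counting formula by a grafting decomposition, and then read off the statement about expectations by dividing by the number of all paths, which is already available from Proposition 3.1 together with the computations of the subsection on downward paths.

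For the count, I would enumerate triples $(T,p,e)$ in which $T$ is a general tree on $n$ vertices, $p$ is a path in $T$, and $e$ is an edge of $p$; this number is exactly the number of edges in all paths. Given such a triple, write $e=(u,v)$ with $u$ the parent of $v$ and let $T_v$ be the subtree rooted at $v$. Since $e\in p$ and $e$ is the only edge joining $T_v$ to the rest of $T$, the path $p$ has exactly one endpoint $a$ inside $T_v$ and one endpoint $b$ outside, and the portion of $p$ lying in $T_v$ is the vertical path from $a$ up to $v$. Collapsing $T_v$ to a single marked leaf $\ell$ (the construction used in the Grafting Lemma) turns $T$ into a tree $T'$ on $n-|T_v|+1$ vertices in which the edge $(u,\ell)$ takes the place of $e$. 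Thus the triple yields the data of a tree $T_v$ with a marked vertex $a$ — equivalently, a vertical path in $T_v$ starting at its root — together with a tree $T'$ carrying a marked leaf $\ell$ and a marked vertex $b\ne\ell$. This is reversible: graft $T_v$ back at $\ell$, set $e=(u,v)$, and let $p$ be the concatenation of the root-to-$a$ vertical path of $T_v$, the edge $e$, and the $u$-to-$b$ path of $T'$; one checks this is a simple path containing $e$ with endpoints $a$ and $b$. Hence these triples are in bijection with such pairs.

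Passing to generating functions, the trees-with-a-marked-vertex are enumerated by $V_\mathcal G(x)=x/\sqrt{1-4x}$, and the trees $T'$ with a marked leaf and a marked vertex distinct from it are enumerated by $xL_\mathcal G'(x)-L_\mathcal G(x)$, which simplifies to $x^2/(1-4x)^{3/2}$ — the very series that the downward-paths subsection obtained (via Corollary 2.3(2) with $k=1$) for the number of edges in vertical paths. Since the decomposition merges one vertex of $T'$ with the root of $T_v$, the generating function for the number of edges in all paths is
$$\frac{1}{x}\cdot\frac{x}{\sqrt{1-4x}}\cdot\frac{x^2}{(1-4x)^{3/2}}=\frac{x^2}{(1-4x)^2},$$
so that number is $[x^n]x^2/(1-4x)^2=(n-1)4^{n-2}$. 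For the second assertion, Proposition 3.1 says the total number of paths in $n$-vertex trees equals the number of edges in vertical paths, namely $[x^n]x^2/(1-4x)^{3/2}=\frac{(2n-3)!}{(n-2)!^2}=\binom n2 c_{n-1}=\frac{1}{2}(n-1)\binom{2n-2}{n-1}$; dividing, the expected length of a uniform path is $\frac{(n-1)4^{n-2}}{\frac12(n-1)\binom{2n-2}{n-1}}=\frac{4^{n-2}}{\frac12\binom{2n-2}{n-1}}$, which is exactly the expected root-to-leaf length computed earlier in that subsection.

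The step I expect to be the main obstacle is pinning down the bijection at its edge cases: one must confirm that the $T_v$-endpoint of $p$ is well defined even when $p$ bends at a vertex strictly above $e$, that requiring $b\ne\ell$ is precisely the condition encoding ``$e$ lies on $p$'' (as opposed to $p$ being entirely contained in $T_v$), that permitting $a=v$ (a length-zero vertical path in $T_v$) causes no conflict, and that the smallest cases (e.g.\ $n=1,2$) agree with $(n-1)4^{n-2}$. Once that is in place, the identity $xL_\mathcal G'(x)-L_\mathcal G(x)=x^2/(1-4x)^{3/2}$ and the coefficient extractions are routine.
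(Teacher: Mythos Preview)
Your argument is correct, and it follows a genuinely different route from the paper's. The paper decomposes a triple $(T,p,e)$ at the \emph{top vertex} $w$ of the path: it peels off the subtree of $w$ whose key edge is the left edge of $p$, producing a tree with a marked vertex together with a tree carrying a path through its key edge with a marked edge. That second factor has generating function $x^2/(1-4x)^{3/2}$ only by appeal to Proposition~3.2, whose proof is a four-case bijection. You instead decompose at the marked edge $e=(u,v)$ itself, applying the Grafting Lemma at $v$; the two pieces are then a tree with a marked vertex (the $T_v$ side) and a tree with a marked leaf $\ell$ and a distinct marked vertex $b$ (the $T'$ side), and the latter is enumerated by $xL_\mathcal G'(x)-L_\mathcal G(x)$ by a one-line count. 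The outcome is the same product $\frac{1}{x}V_\mathcal G(x)\cdot \frac{x^2}{(1-4x)^{3/2}}$, but your derivation bypasses Proposition~3.2 entirely and is more self-contained. What the paper's route buys is that Proposition~3.2 is of independent interest; what your route buys is brevity.

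Two small remarks. First, for the second assertion you invoke Proposition~3.1 to get the number of paths, but this is heavier than needed: the paper simply observes that a path is determined by its two endpoints, so the count is $\binom{n}{2}c_{n-1}$ immediately. Second, your worry about ``$b\neq\ell$'' encoding exactly ``$e$ lies on $p$'' is resolved automatically: since $e$ separates $T_v$ from $T\setminus T_v$ and $e\in p$, exactly one endpoint of $p$ lies in $T_v$, so $b$ (the outside endpoint) is never $v$, hence never $\ell$ after collapsing; conversely any $b\neq\ell$ in $T'$ sits outside the grafted $T_v$, forcing the unique $a$--$b$ path to cross $e$.
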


\begin{proof}
We may decompose any path with a marked edge into a tree with a marked vertex and a tree with a path through the key edge with a marked edge. This is done as follows: take the vertex which is closest to the root contained in the path and remove the subtree starting at the left edge of the path.

\begin{figure}
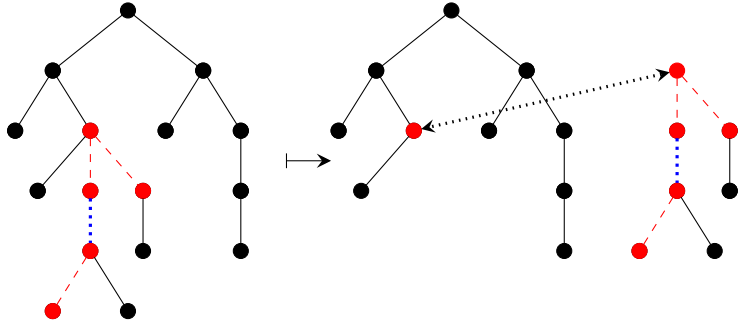

\begin{istgame}
\setistSolidNodeStyle{.2cm}
\xtdistance{8mm}{10mm}
\xtShowEndPoints
\istroot(A0)(0,0)
\istb
\istb<missing>
\istb
\endist
\istroot(A11)(A0-1)
\istb
\istb
\endist
\istroot(A12)(A0-3)
\istb
\istb
\endist
\istroot(A21)(A11-1)
\endist
\xtdistance{8mm}{7mm}
\istroot(A22)(A11-2)[red]
\istb
\istb[red, dashed]
\istb[red, dashed]
\endist
\xtdistance{8mm}{10mm}
\istroot(A23)(A12-1)
\endist
\istroot(A24)(A12-2)
\istb
\endist
\istroot(A31)(A22-1)
\endist
\istroot(A32)(A22-2)[red]
\istb[blue, dotted, very thick]
\endist
\istroot(A33)(A22-3)[red]
\istb
\endist
\istroot(A34)(A24-1)
\istb
\endist
\istroot(A42)(A32-1)[red]
\istb[red, dashed]
\istb
\endist 
\istroot(A51)(A42-1)[red]
\endist

\draw[black, |->,decoration={markings,mark=at position 1 with {\arrow[scale=1.7,black]{>}}},
    postaction={decorate},
    shorten >=0.4pt]
    (2.1,-2) -- (2.7,-2);

\istroot(B0)(4.3,0)
\istb
\istb<missing>
\istb
\endist
\istroot(B11)(B0-1)
\istb
\istb
\endist
\istroot(B12)(B0-3)
\istb
\istb
\endist
\istroot(B21)(B11-1)
\endist
\xtdistance{8mm}{7mm}
\istroot(B22)(B11-2)[red]
\istb
\istb<missing>
\istb<missing>
\endist
\xtdistance{8mm}{10mm}
\istroot(B23)(B12-1)
\endist
\istroot(B24)(B12-2)
\istb
\endist
\istroot(B34)(B24-1)
\istb
\endist

\xtdistance{8mm}{7mm}
\istroot(C0)(7.3,-0.8)[red]
\istb<missing>
\istb[red, dashed]
\istb[red, dashed]
\endist
\xtdistance{8mm}{10mm}
\istroot(C11)(C0-2)[red]
\istb[blue, dotted, very thick]
\endist
\istroot(C12)(C0-3)[red]
\istb
\endist
\istroot(C21)(C11-1)[red]
\istb[red, dashed]
\istb
\endist
\istroot(C22)(C12-1)
\endist
\istroot(C31)(C21-1)[red]
\endist
\xtInfoset[<->, very thick](C0)(B22)
\end{istgame}
\caption{The bijection corresponding to (2.3).}
\end{figure}
Let $E_\mathcal{G}(x)$ be the generating function for the number of edges in all paths on $n$ vertices. Then, by the product formula for generating functions, 
\begin{equation}
E_\mathcal{G}(x) = \frac 1 x V_\mathcal{G}(x) \cdot \frac{x^2}{(1-4x)^{3/2}}= \frac{x^2}{(1-4x)^2},
\end{equation}
and the result follows by the binomial theorem.

Thus the expected length of a uniformly randomly selected path in a uniformly randomly selected ordered tree on $n$ vertices is

\begin{equation}
\dfrac{(n-1)4^{n-2}}{{n \choose 2} \frac{1}{n}{2n-2 \choose n-1}}=\dfrac{(n-1)4^{n-2}}{\frac{(n-1)}{2} \frac{4^{n-1}(2n-3)!!}{(2n-2)!!}}=\frac{(2n-2)!!}{2(2n-3)!!}.
\end{equation}
Thus, for all $n \geq 2$, the expected length of a uniformly selected path is equal to the expected length of a uniformly selected path from the root to a leaf (comparing with (3.1)).
\end{proof}

The sum of all distances in a graph is also called the Wiener Index of a graph. The Wiener Index for general trees was previously derived by Entringer, Meir, Moon, and Sz{\'e}kely using different methods, including a nice bijective proof \cite{entringer_meir_moon_szekely_1994}.

\begin{prop}
The number of paths between leaves in all general trees on $n$ vertices is 
$$\dfrac{(2n-5)!}{((n-3)!)^2},$$
and the number of edges in all such paths is 
$$(n-2)4^{n-3}+  \dfrac{(2n-5)!}{(n-3)!^2}.$$
\end{prop}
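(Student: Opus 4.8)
The plan is to reduce both counts to generating-function identities and then extract coefficients as in Section~3.1. (Throughout, a \emph{path between leaves} means a path whose two endpoints are distinct leaves, and the displayed formulas are read for $n\geq 3$.) The starting observation is structural: a leaf-to-leaf path of positive length is never vertical, since its highest vertex would then be a leaf with a proper descendant; and any two distinct leaves are incomparable in the ancestor order, so they possess a lowest common ancestor that is a proper ancestor of both, and the unique tree-path joining them climbs to that ancestor and descends. Hence leaf-to-leaf paths in a tree $T$ are in bijection with unordered pairs of distinct leaves of $T$, so the number of leaf-to-leaf paths over all trees on $n$ vertices is $\sum_{|T|=n}\binom{\ell(T)}{2}$, where $\ell(T)$ is the number of leaves of $T$.

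To evaluate this sum I would introduce the bivariate generating function $F(x,u)=\sum_{T}x^{|T|}u^{\ell(T)}$, which for general (plane) trees satisfies $F = xu + \tfrac{xF}{1-F}$ (the root is a leaf, or carries a nonempty ordered sequence of subtrees). Then $F(x,1)=T_\mathcal G(x)$ and $\partial_u F(x,1)=L_\mathcal G(x)$. Differentiating the functional equation once and then again in $u$, setting $u=1$, and simplifying with the identity $(1-T_\mathcal G)^2-x=\sqrt{1-4x}\,(1-T_\mathcal G)$, one solves for $\partial_u^2F(x,1)$ and obtains $\sum_{T}\binom{\ell(T)}{2}x^{|T|}=\tfrac12\partial_u^2F(x,1)=\dfrac{x^3}{(1-4x)^{3/2}}$. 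Reading off $[x^n]$ (as for the downward-path generating functions in Section~3.1) yields the first claim, $\dfrac{(2n-5)!}{(n-3)!^2}$.

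For the number of edges I would count incidences: $\sum_{\{l_1,l_2\}}d(l_1,l_2)=\sum_{e\in E(T)}\#\{\{l_1,l_2\}: e\text{ lies on the }l_1\text{--}l_2\text{ path}\}$. If $T_e$ denotes the subtree of $T$ hanging below the edge $e$, then $e$ separates precisely the leaf-pairs with exactly one member in $T_e$; since the leaves of $T_e$ are exactly the leaves of $T$ lying in $T_e$, that number is $\ell(T_e)\bigl(\ell(T)-\ell(T_e)\bigr)$. Now invoke the Grafting Lemma: cutting at $e$ is a bijection between trees on $n$ vertices carrying a marked edge and pairs $(T_0,T_e)$ where $T_0$ carries a marked leaf (the slot vacated by $T_e$) and $|T_0|+|T_e|=n+1$; under it $\ell(T)-\ell(T_e)$ becomes $\ell(T_0)-1$, the number of unmarked leaves of $T_0$. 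Summing the weight $\ell(T_e)\bigl(\ell(T_0)-1\bigr)$ over this bijection factors the generating function for edges in leaf-to-leaf paths as
\begin{align*}
\frac1x\left(\sum_{T}\ell(T)\bigl(\ell(T)-1\bigr)x^{|T|}\right)L_\mathcal G(x)
&=\frac{2x^{2}}{(1-4x)^{3/2}}\left(\frac{x}{2\sqrt{1-4x}}+\frac{x}{2}\right)\\
&=\frac{x^{3}}{(1-4x)^{2}}+\frac{x^{3}}{(1-4x)^{3/2}},
\end{align*}
where I used the first part via $\sum_T\ell(T)\bigl(\ell(T)-1\bigr)x^{|T|}=2\sum_T\binom{\ell(T)}{2}x^{|T|}$. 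Extracting $[x^n]$ from the two summands gives $(n-2)4^{n-3}$ and $\dfrac{(2n-5)!}{(n-3)!^2}$, which is the second claim.

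The conceptual steps are short, so I expect the real care to be needed in (i) the degenerate bookkeeping --- whether the root can be a leaf (it cannot, for $n\geq2$), whether length-zero paths are counted, and the small cases $n\leq2$ --- and (ii) the mechanical differentiation of the functional equation together with the two exact coefficient extractions. It would also be natural, matching the style of Propositions~3.1--3.2, to seek fully bijective proofs: $\dfrac{x^3}{(1-4x)^{3/2}}=x\cdot\dfrac{x^2}{(1-4x)^{3/2}}$ suggests leaf-to-leaf paths on $n$ vertices biject with marked edges of downward paths on $n-1$ vertices, while $(n-2)4^{n-3}$ is precisely the number of edges in all paths on $n-1$ vertices (Corollary~3.3), so the edge count should split as ``(edges in all paths on $n-1$ vertices) plus (leaf-to-leaf paths on $n$ vertices)''; producing these bijections explicitly would be the harder, optional part.
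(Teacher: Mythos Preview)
Your argument is correct and lands on exactly the same two generating functions, $\dfrac{x^3}{(1-4x)^{3/2}}$ and $\dfrac{x^3}{(1-4x)^2}+\dfrac{x^3}{(1-4x)^{3/2}}$, but the route differs from the paper's in both halves.

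For the path count, the paper does not pass through the leaf-statistic $\binom{\ell(T)}{2}$ or the bivariate $F(x,u)$. Instead it first writes the generating function for leaf-to-leaf paths \emph{through the root}: deleting the root leaves three (possibly empty) blocks of untouched subtrees and two subtrees each carrying a marked leaf, giving $x\bigl(\tfrac{1}{1-T_{\mathcal G}}\bigr)^3 L_{\mathcal G}^2=\dfrac{T_{\mathcal G}^3}{x^2}L_{\mathcal G}^2$. One application of the Grafting Lemma then moves the apex of the path off the root, and the identity $\tfrac{L_{\mathcal G}}{x}T_{\mathcal G}=V_{\mathcal G}$ collapses everything to $V_{\mathcal G}^3=\dfrac{x^3}{(1-4x)^{3/2}}$. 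Your second-factorial-moment computation reaches the same function with less combinatorial scaffolding but a bit more implicit differentiation; the paper's version makes the factor $V_{\mathcal G}^3$ visible.

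For the edge count, the paper does not use the separator identity $\ell(T_e)\bigl(\ell(T)-\ell(T_e)\bigr)$ at all. It instead restricts the bijection of Proposition~3.2 (paths $\leftrightarrow$ marked edges of key-edge paths) to leaf-ended paths, observes that the only edges not hit are key edges whose lower endpoint is a leaf, and enumerates those missing cases by the generating function $x\cdot\tfrac{L_{\mathcal G}}{x}\bigl(L_{\mathcal G}-x\bigr)=\dfrac{x^3}{1-4x}$. The key-edge decomposition of Corollary~3.3 then converts the adjusted ``edges in key-edge paths'' count into the global edge count. Your cut-edge/Grafting factorisation is more self-contained---it does not lean on Proposition~3.2 or Corollary~3.3 and would transplant to other graftable families---whereas the paper's argument stays inside the bijective machinery of the section and explains directly why the answer is ``(edges in all paths on $n-1$ vertices) $+$ (leaf-to-leaf paths on $n$ vertices)'', the very splitting you flagged as an optional bijective target.
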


\begin{proof}
To find a path between two leaves, we first consider the generating function for the number of such paths which include the root. If we remove the root of a tree with such a path we have a sequence of trees, followed by a tree with a marked leaf, followed by another sequence, another tree with a marked leaf, and a third sequence. Thus, since $T_\mathcal{G}(x) = x (1-T_\mathcal{G}(x))^{-1},$ the generating function for the number of trees with two marked leaves is

$$x \left(\frac{1}{1-T_\mathcal{G}(x)}\right)^3 L^2_\mathcal{G} (x) = \frac{T_\mathcal{G}^3(x)}{x^2} L_\mathcal{G}^2(x).$$

Using the Grafting Lemma to pass to any such path, and substituting using (2.1), the generating function for the number of paths which ends in leaves 

\begin{equation*}
\frac{L_\mathcal{G}(x)}{x} \frac{T_\mathcal{G}^3(x)}{x^2} L_\mathcal{G}^2(x)=V_\mathcal{G}^3(x)=\frac{x^3}{(1-4x)^{3/2}}.
\end{equation*}

Extracting coefficients gives the first part of the statement. For the second part, we appeal to the bijection used for Proposition 2.4. First, it is not hard to see, by examination of the cases, that this bijection maps paths whose endpoints are leaves to edges in such paths. However, not all edges are the image of such a path. The edges which are missed are exactly the key edges which end in leaves. These fall under case 2 of the bijection, so the preimage of such an edge is a vertical path which is not under the key edge, where the key edge of the tree ends in a leaf. We may attain any such path uniquely by taking a tree with a downward path which terminates at a leaf, and adding a single vertex to the root, on the left. Thus, the generating function for such cases is 

$$x \frac{L_\mathcal{G}(x)}{x} \left( L_\mathcal{G}(x) - x\right)=\frac{x^3}{1-4x}.$$

By the argument used Corollary 3.3, the generating function for edges in paths from leaves is

$$ \frac{V_\mathcal{G}(x)}{x} \left(\frac{x^3}{(1-4x)^{3/2}}+\frac{x^3}{1-4x} \right)= \frac{x^3}{(1-4x)^{2}}+\frac{x^3}{(1-4x)^{3/2}},$$

and the result follows by extracting coefficients.
\end{proof}

This implies that the number of edges in paths between leaves in trees on $n$ vertices is equal to the number of edges in all paths plus the number of paths, or, equivalently, the number of all marked vertices in all marked paths in trees on $n-1$ vertices. This result also invites a bijective proof, which is not known to the author.

\begin{corollary}
The expected length of a uniformly selected path between leaves in a uniformly selected general tree on $n$ vertices is 
\begin{equation}
\dfrac{(2n-4)!!}{2(2n-5)!!}+1 \approx \dfrac{\sqrt{\pi n}}{2}+1- \frac{7 \sqrt{\pi}}{16\sqrt{n}} + \bigo{\frac{1}{n^{3/2}}}.
\end{equation}
\end{corollary}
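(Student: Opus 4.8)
The plan is to form the ratio of the two counts supplied by the preceding Proposition: the expected length of a uniformly selected leaf-to-leaf path is the number of edges in all such paths divided by the number of such paths,
$$\frac{(n-2)4^{n-3}+\frac{(2n-5)!}{(n-3)!^2}}{\frac{(2n-5)!}{(n-3)!^2}}=1+\frac{(n-2)4^{n-3}(n-3)!^2}{(2n-5)!},$$
so the whole problem reduces to evaluating the second term in closed form and then reading off its asymptotics.

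First I would rewrite everything in double-factorial form. Using $(2n-5)!=(2n-5)!!\,(2n-6)!!$ together with $(2n-6)!!=2^{n-3}(n-3)!$ (and $4^{n-3}=2^{n-3}\cdot 2^{n-3}$, $(n-2)(n-3)!=(n-2)!$), the second term collapses:
$$\frac{(n-2)4^{n-3}(n-3)!^2}{(2n-5)!}=\frac{(n-2)\,2^{n-3}(n-3)!}{(2n-5)!!}=\frac{2^{n-3}(n-2)!}{(2n-5)!!}=\frac{(2n-4)!!}{2(2n-5)!!},$$
where the last equality uses $(2n-4)!!=2^{n-2}(n-2)!$. This gives the exact value $\tfrac{(2n-4)!!}{2(2n-5)!!}+1$.

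For the asymptotic estimate I would pass to a central binomial coefficient through the identity $\frac{(2m)!!}{(2m-1)!!}=\frac{4^m}{\binom{2m}{m}}$, applied with $m=n-2$, so that $\frac{(2n-4)!!}{(2n-5)!!}=\frac{4^{n-2}}{\binom{2n-4}{n-2}}$. Inserting the standard Stirling expansion $\binom{2m}{m}=\frac{4^m}{\sqrt{\pi m}}\bigl(1-\frac{1}{8m}+\bigo{m^{-2}}\bigr)$ gives $\frac{4^m}{\binom{2m}{m}}=\sqrt{\pi m}+\frac{\sqrt{\pi}}{8\sqrt{m}}+\bigo{m^{-3/2}}$; substituting $m=n-2$ and expanding $\sqrt{\pi(n-2)}=\sqrt{\pi n}-\frac{\sqrt{\pi}}{\sqrt{n}}+\bigo{n^{-3/2}}$ yields $\frac{(2n-4)!!}{(2n-5)!!}=\sqrt{\pi n}-\frac{7\sqrt{\pi}}{8\sqrt{n}}+\bigo{n^{-3/2}}$. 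Dividing by $2$ and adding $1$ produces the displayed expression.

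There is no conceptual obstacle; the only care needed is in the double-factorial bookkeeping (keeping parities and the shift $m=n-2$ straight) and in carrying the Stirling expansion far enough to pin down the $1/\sqrt{n}$ coefficient correctly. A sharper expansion, valid to higher order, is recorded in the appendix.
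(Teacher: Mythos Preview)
Your argument is correct. For the exact value you do exactly what the paper does---form the ratio of the two counts from Proposition~3.4 and simplify---and your double-factorial bookkeeping is clean and accurate. For the asymptotic expansion the paper instead applies singularity analysis to the generating functions $\frac{x^3}{(1-4x)^{3/2}}$ and $\frac{x^3}{(1-4x)^2}+\frac{x^3}{(1-4x)^{3/2}}$ and then takes the ratio of the resulting asymptotic coefficients, whereas you work directly from the closed form via the identity $\frac{(2m)!!}{(2m-1)!!}=4^m/\binom{2m}{m}$ and Stirling's expansion. Both routes are standard and yield the same result; yours is slightly more elementary and self-contained, while the paper's fits the singularity-analysis framework used throughout and extends more mechanically to further terms (as in the appendix).
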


The exact expression is attained by taking ratios, and the asymptotic expression is attained by using singularity analysis on the generating functions and once again taking ratios. Thus, insisting that a path end in leaves only adds a constant to the expected value, as it is with vertical paths in all graftable trees satisfying (2.2).

\subsection{Arbitrary Paths in Binary Trees}

Let $\mathcal{B}$ be the class of binary trees. First, let $p_{n,k}$ be the number of paths in trees on $n$ vertices which originate at the root and end at a a leaf, and let $$P_\mathcal{B}(x,y)= \displaystyle \sum_{n, k, \geq 1} p_{n,k}x^n y^k.$$  
If we take such a tree, either it is a single vertex, or we can remove the root, leaving a marked edge and a similar tree in one of two ways (since left or right matters), or we may have a similar tree and a regular binary tree in one of two ways. Thus 
$$P_\mathcal{B}(x,y)= x+2 x y P_\mathcal{B}(x,y)+ 2 x y P_\mathcal{B}(x,y) T_\mathcal{B}(x),$$ 
with $T_\mathcal{B}(x)= \dfrac{1-2x-\sqrt{1-4x}}{2x}$. Solving for $P(x,y),$

\begin{equation*}
P(x,y)=\frac{x}{1-y+y \sqrt{1-4 x}}.
\end{equation*}

With this in hand, we can find the bivariate generating function for all paths in a tree using the grafting lemma. First, we graft a tree onto the end of our marked path so that the generating function for paths that start at the root and end anywhere is $\frac{P(x,y)}{x} \cdot T_\mathcal{B}(x)$. The generating function for a path that passes through the root is $x y^2 \left( \frac{P(x,y)}{x} \cdot T_\mathcal{B}(x)\right)^2$, since, if we remove the root, the tree must have two marked edges leading to two children, both of which are a tree with a marked path from the root. The sum of these two gives all paths whose top vertex is the root. We then apply the grafting lemma to give paths whose top vertex is anywhere in the tree. Let $p^*_{n,k}$ be the number of all paths in binary trees on $n$ vertices with $k$ edges, and let
$$P^*_\mathcal{B}(x,y)= \displaystyle \sum_{n, k, \geq 1} p^*_{n,k}x^n y^k.$$  
Then 
\begin{equation*}
P^*(x,y)=\frac{P(x,y)}{x} \cdot T_\mathcal{B}(x)+x y^2 \left( \frac{P(x,y)}{x} \cdot T_\mathcal{B}(x)\right)^2.
\end{equation*}
Then the generating function for the number of edges of trees on size $n$ is 
\begin{equation*}
\left. \frac{\partial}{\partial y} P^*(x,y) \right|_{y=1}=\frac{1-\sqrt{1-4x}-2x}{(1-4x)^2}.
\end{equation*}
The generating function for the number of such paths is simply $\frac{x^2}{2} \frac{d^2}{dx^2}T_B(x),$ so, applying singularity analysis in both cases and taking ratios, we have that the expected length of an a uniformly selected path in a binary tree on $n$ vertices is
\begin{equation*}
\sqrt{\pi n} -4+\bigo{\dfrac{1}{n^{1/2}}}.
\end{equation*}

To find the generating functions for arbitrary paths which end in leaves, we carry out the same process, but omit the multiplication by $T_\mathcal{B}(x)$ at the end of the paths, which gives the generating functions and results supplied in the appendix. It is straightforward to show that the generating function for the number of paths between leaves is $x (L_\mathcal{B}(x))^2.$ Comparing expectations shows that paths between leaves are expected to $\frac 4 \pi \approx 1.2732$ times longer than arbitrary paths as the size of the tree goes to infinity, in contrast to general trees, where the expected lengths differ by a constant.
\section{Appendix}

\subsection{Calculated results}
All formulas on the following tables should be taken for $n \geqslant 2$, or in the cases of paths between leaves, for $n \geqslant 3$, since there are no paths on trees of lesser size. Thus, some of the formulas do not match the coefficients for the generating functions for smaller values of $n$.

\begin{table}
\makebox[1 \textwidth][c]{ 
\small
\def\arraystretch{2}
\begin{tabular}{|m{.8cm} | c | c | m{8cm} |}
\hline
\multicolumn{4}{|c|}{General Trees} \\
\hline
\multirow{6}{0.8cm}{\hspace{0.1in}\begin{sideways}  Downward Paths from The Root  \hspace{0.45 in} \end{sideways}} 
& \multirow{3.3}{0.5in}{Number} 
& Generating Function & \vspace{-.05in} $\displaystyle\dfrac{x}{\sqrt{1-4x}}-\dfrac{1-\sqrt{1-4x}}{2}$ \\ 
\cline{3-4}
& & Coefficient & \vspace{0.02in} $\displaystyle{2n-2 \choose n} $\\
\cline{3-4}
& &Asymptotic Coefficient & $4^n \left( \dfrac{1}{4 \sqrt{\pi n}}-\dfrac{5}{32 \sqrt{\pi n^3}} - \dfrac{23}{512 \sqrt{\pi n^5}} \right) +\bigo{\dfrac{4^n }{n^{7/2}}}$ \\ 
\cline{2-4}
& \multirow{3.3}{0.5in}{Edges}
& Generating Function & \vspace{-.05in} $\dfrac{x}{2(1-4x)} - \dfrac{x}{2\sqrt{1-4x}}$ \\ 
\cline{3-4}
& & Coefficient &\vspace{0.02in}$\displaystyle\dfrac{1}{2} \left(4^{n-1}-{2n-2 \choose n-1} \right)$  \\
\cline{3-4}
& &Asymptotic Coefficient & $4^n \left( \dfrac{1}{8} - \dfrac{1}{8 \sqrt{ \pi n}} - \dfrac{3}{64\sqrt{\pi n^3}} \right) + \bigo{\dfrac{4^n}{n^{5/2}}}$ \\
\cline{2-4}
& \multirow{2.2}{0.5 in}{Expected Length} 
&Coefficient & \vspace{-.05in}$n\left(\dfrac{(2n-4)!!}{(2n-3)!!}-\dfrac{1}{2(n-1)}\right) $\\
\cline{3-4}
& & Asymptotic Coefficient& $\dfrac{\sqrt{\pi n}}{2}-\dfrac{1}{2} + \dfrac{5 \sqrt{\pi}}{16 \sqrt{n}}- \dfrac{1}{2n} +\dfrac{73 \sqrt{\pi}}{256 \sqrt{n^3}} -\dfrac{1}{2n^2}+ \bigo{\dfrac{1}{n^{5/2}}}$ \\ 
\hline
\multirow{6}{1cm}{\hspace{0.1in}\begin{sideways}  Downward Paths  \hspace{0.85 in} \end{sideways}} 
& \multirow{3.3}{0.5in}{Number} 
& Generating Function & $\dfrac{x}{2(1-4x)} - \dfrac{x}{2\sqrt{1-4x}}$ \\ 
\cline{3-4}
& & Coefficient &\vspace{0.02in} $\displaystyle \dfrac{1}{2} \left(4^{n-1}-{2n -2\choose n-1} \right)$ \\
\cline{3-4}
& &Asymptotic Coefficient & $4^n \left( \dfrac{1}{8} - \dfrac{1}{8 \sqrt{ \pi n}} - \dfrac{3}{64\sqrt{\pi n^3}} \right) + \bigo{\dfrac{4^n}{n^{5/2}}}$ \\
\cline{2-4}
& \multirow{3.3}{0.5in}{Edges}
& Generating Function & $\dfrac{x^2}{(1-4x)^{3/2}}$ \\ 
\cline{3-4}
& & Coefficient & $\dfrac{(2n-3)!}{((n-2)!)^2}$ \\
\cline{3-4}
& &Asymptotic Coefficient & $4^n \left( \dfrac{\sqrt{n}}{8 \sqrt{\pi}}-\dfrac{5}{64 \sqrt{\pi n}} - \dfrac{23}{1024 \sqrt{\pi n ^3}}\right) + \bigo{\dfrac{4^n}{n^{5/2}}}$ \\
\cline{2-4}
& \multirow{2.2}{0.5 in}{Expected Length} 
&Coefficient & $\dfrac{n-1}{\frac{(2n-2)!!}{(2n-3)!!}-1}$\\
\cline{3-4}
& & Asymptotic Coefficient& $\dfrac{\sqrt{n}}{ \sqrt{\pi}} + \dfrac{1}{\pi}  + \dfrac{8-5\pi}{8 \pi^{3/2} \sqrt{n}} +\dfrac{4-\pi}{4\pi^2 n} +\bigo{\dfrac{1}{n^{3/2}}}$ \\ 
\hline
\multirow{6}{1cm}{\begin{sideways}  \parbox{1.8in}{\begin{center}Downward Paths from The Root Which End in Leaves\end{center}}  \hspace{0.325in}  \end{sideways}} 
& \multirow{3.3}{0.5in}{Number} 
& Generating Function & $\dfrac{x}{2\sqrt{1-4x}}-\dfrac{x}{2}$ \\
\cline{3-4}
& & Coefficient &\vspace{0.02in} $\displaystyle \dfrac{1}{2} {2n-2 \choose n-1}$ \\
\cline{3-4}
& & Asymptotic Coefficient & $4^n \left( \dfrac{1}{8 \sqrt{\pi n}}+\dfrac{3}{64 \sqrt{\pi n^3}} + \dfrac{25}{1024 \sqrt{\pi n^5}} \right) +\bigo{\dfrac{4^n}{n^{7/2}}}$ \\
\cline{2-4}
& \multirow{2.2}{0.5in}{Edges}
& Generating Function & $\dfrac{x^2}{1-4x}$ \\
\cline{3-4}
& & Coefficient & $4^{n-2}$ \\
\cline{2-4}
& \multirow{2.2}{0.5 in}{Expected Length} 
&Coefficient & $\dfrac{(2n-2)!!}{2(2n-3)!!}$\\
\cline{3-4}
& & Asymptotic Coefficient& $\dfrac{\sqrt{\pi n}}{2} - \dfrac{3 \sqrt{\pi}}{16 \sqrt{n}}- \dfrac{7 \sqrt{\pi}}{256 \sqrt{n^3}} + \bigo{\dfrac{1}{n^{5/2}}}$ \\
\hline


\end{tabular}
}
\end{table}

\begin{table}
\makebox[1 \textwidth][c]{ 
\small
\def\arraystretch{2}
\begin{tabular}{| m{.8cm} | c | c | m{8cm} |}
\hline
\multicolumn{4}{|c|}{General Trees, cont.} \\
\hline
\multirow{6}{0.5in}{\begin{sideways}  \parbox{1.5in}{\begin{center}Downward Paths Which End in Leaves\end{center}}  \hspace{0.335in} \end{sideways}} 
& \multirow{2.2}{0.5in}{Number} 
& Generating Function &\vspace{-.05in} $\dfrac{x^2}{1-4x}$ \\
\cline{3-4}
& & Coefficient & $4^{n-2}$ \\
\cline{2-4}
& \multirow{3.3}{0.5in}{Edges}
& Generating Function & $\dfrac{x^2}{2(1-4x)^{3/2}}+\dfrac{x^2}{2(1-4x)}$ \\
\cline{3-4}
& & Coefficient & $\dfrac{(2n-3)!}{2((n-2)!)^2}+\dfrac{4^{n-2}}{2}$ \\
\cline{3-4}
& &Asymptotic Coefficient & $4^n \left( \dfrac{\sqrt{n}}{16 \sqrt{\pi}}+\dfrac{1}{32} -\dfrac{5}{128 \sqrt{\pi n}} -\dfrac{23}{2048 \sqrt{\pi n^3}}
\right) + \bigo{\dfrac{4^n}{n^{5/2}}}$ \\
\cline{2-4}
& \multirow{2.2}{0.5 in}{Expected Length} 
&Coefficient & $\dfrac{(2n-3)!!}{2(2n-4)!!}+\dfrac{1}{2}$\\
\cline{3-4}
& & Asymptotic Coefficient& $\dfrac{\sqrt{n}}{\sqrt{\pi}}+\dfrac {1}{2} - \dfrac{5}{8 \sqrt{\pi n}} - \dfrac{23}{128 \sqrt{\pi n^3}}+\bigo{\dfrac{1}{n^{5/2}}}$ \\
\hline
\multirow{6}{0.5in}{\hspace{.2cm}\begin{sideways}  \parbox{1.8in}{\begin{center} Arbitrary Paths \end{center}}  \hspace{0.25in} \end{sideways}} 
& \multirow{3.3}{0.5in}{Number} 
& Generating Function & $\dfrac{x^2}{(1-4x)^{3/2}}$ \\
\cline{3-4}
& & Coefficient & $\dfrac{(2n-3)!}{((n-2)!)^2}$ \\
\cline{3-4}
& & Asymptotic Coefficient & $4^n \left( \dfrac{\sqrt{n}}{8 \sqrt{\pi}}-\dfrac{5}{64 \sqrt{\pi n}} - \dfrac{23}{1024 \sqrt{\pi n ^3}}\right) + \bigo{\dfrac{4^n}{n^{5/2}}}$ \\
\cline{2-4}
& \multirow{2.2}{0.5in}{Edges}
& Generating Function & $\dfrac{x^2}{(1-4x)^2}$ \\
\cline{3-4}
& & Coefficient & $(n-1)4^{n-2}$ \\
\cline{2-4}
& \multirow{2.2}{0.5 in}{Expected Length} 
&Coefficient & $\dfrac{(2n-2)!!}{2(2n-3)!!}$\\
\cline{3-4}
& & Asymptotic Coefficient& $\dfrac{\sqrt{\pi n}}{2} - \dfrac{3 \sqrt{\pi}}{16 \sqrt{n}}- \dfrac{7 \sqrt{\pi}}{256 \sqrt{n^3}} + \bigo{\dfrac{1}{n^{5/2}}}$ \\
\hline
\multirow{6}{0.5in}{\hspace{.2cm}\begin{sideways}  \parbox{1.8in}{\begin{center} Paths Between Leaves \end{center}}  \hspace{0.475in} \end{sideways}} 
& \multirow{3.3}{0.5in}{Number} 
& Generating Function & $\dfrac{x^3}{(1-4x)^{3/2}}$ \\
\cline{3-4}
& & Coefficient & $\dfrac{(2n-5)!}{((n-3)!)^2}$ \\
\cline{3-4}
& & Asymptotic Coefficient & $\displaystyle 4^n \left( \frac{\sqrt{n}}{32\sqrt{\pi}}-\frac{9}{256\sqrt{\pi n}}- \frac{79}{4096 \sqrt{\pi n^3}} 
\right) + \bigo{\frac{4^n}{n^{5/2}}} $ \\
\cline{2-4}
& \multirow{3.3}{0.5in}{Edges}
& Generating Function &$\dfrac{x^3}{(1-4x)^2}+\dfrac{x^3}{(1-4x)^{3/2}}$ \\
\cline{3-4}
& & Coefficient & $\displaystyle (n-2)4^{n-3}+\dfrac{(2n-5)!}{((n-3)!)^2}$ \\
\cline{3-4}
& & Asymptotic Coefficient & $\displaystyle 4^n \left( \frac{n}{64}+\frac{\sqrt{n}}{32\sqrt{\pi}}-\frac{1}{32}-\frac{9}{256\sqrt{\pi n}}
\right) + \bigo{\frac{4^n}{n^{3/2}}} $ \\
\cline{2-4}
& \multirow{2.2}{0.5 in}{Expected Length} 
&Coefficient & $\dfrac{(2n-4)!!}{2(2n-5)!!}+1$\\
\cline{3-4}
& & Asymptotic Coefficient& $\displaystyle \frac{\sqrt{\pi n}}{2} +1 - \frac{7 \sqrt{\pi}}{16 \sqrt{n}} - \frac{47\sqrt{\pi}}{256 n ^{3/2}} + \bigo{\frac{1}{n^{5/2}}}$ \\
\hline
\end{tabular}
}
\end{table} 

\begin{table}
\makebox[1 \textwidth][c]{ 
\small
\def\arraystretch{2}
\begin{tabular}{|m{.8cm} | c | c | m{8cm} |}
\hline
\multicolumn{4}{|c|}{Binary Trees} \\
\hline
\multirow{6}{0.5in}{\hspace{0.1in}\begin{sideways}  Downward Paths from The Root  \hspace{0.25 in} \end{sideways}} 
& \multirow{3.3}{0.5in}{Number} 
& Generating Function & $1-\dfrac{1}{x}-\dfrac{3}{\sqrt{1-4x}}+\dfrac{1}{x\sqrt{1-4x}}$ \\ 
\cline{3-4}
& & Coefficient &\vspace{0.02in} $\displaystyle {2n+2 \choose n+1}- 3 {2n \choose n}$ \\
\cline{3-4}
& &Asymptotic Coefficient & $\displaystyle 4^n \left(\dfrac{1}{\sqrt{\pi n}} - \dfrac{17}{8 \sqrt{\pi n^3}} + \dfrac{289}{128 \sqrt{\pi n^5}} \right) +\bigo{\dfrac{4^n }{n^{7/2}}}$ \\
\cline{2-4}
& \multirow{3.3}{0.5in}{Edges}
& Generating Function & $\dfrac{1}{x(1-4x)}-\dfrac{1}{x\sqrt{1-4x}}-\dfrac{3}{1-4x}+\dfrac{1}{\sqrt{1-4x}}$ \\ 
\cline{3-4}
& & Coefficient & \vspace{0.02in} $\displaystyle 4^n - {2n+2 \choose n+1} + {2n \choose n}$  \\
\cline{3-4}
& &Asymptotic Coefficient & $4^n \left(1- \dfrac{3}{\sqrt{\pi n}}+\dfrac{19}{8 \sqrt{\pi n^3}}- \dfrac{291}{128 \sqrt{\pi n^5}} \right) + \bigo{\dfrac{4^n}{n^{7/2}}}$ \\
\cline{2-4}
& \multirow{1.2}{0.5 in}{Expected Length} 
& Asymptotic Coefficient& $\sqrt{\pi n}-3 + \dfrac{17 \sqrt{\pi}}{8\sqrt{n}}-\dfrac{4}{n}+\dfrac{289 \sqrt{\pi}}{128 \sqrt{n^3}}-\dfrac{4}{n^2}+ \bigo{\dfrac{1}{n^{5/2}}}$ \\
\hline
\multirow{6}{0.5in}{\hspace{0.1in}\begin{sideways}  Downward Paths  \hspace{0.6 in} \end{sideways}} 
& \multirow{3.3}{0.5in}{Number} 
& Generating Function & \vspace{-.05in} $\dfrac{1}{x(1-4x)}-\dfrac{1}{x\sqrt{1-4x}}-\dfrac{3}{1-4x}+\dfrac{1}{\sqrt{1-4x}}$ \\ 
\cline{3-4}
& & Coefficient & \vspace{0.02in} $ \displaystyle 4^n - {2n+2 \choose n+1} + {2n \choose n}$  \\
\cline{3-4}
& &Asymptotic Coefficient & $4^n \left(1- \dfrac{3}{\sqrt{\pi n}}+\dfrac{19}{8 \sqrt{\pi n^3}}- \dfrac{291}{128 \sqrt{\pi n^5}} \right) + \bigo{\dfrac{4^n}{n^{7/2}}}$ \\
\cline{2-4}
& \multirow{3.3}{0.5in}{Edges}
& Generating Function & \vspace{-.05in} $\dfrac{1}{x(1-4x)^{3/2}}-\dfrac{1}{x(1-4x)}-\dfrac{3}{(1-4x)^{3/2}}+\dfrac{1}{1-4x}$ \\ 
\cline{3-4}
& & Coefficient & $\dfrac{(2n+3)!}{((n+1)!)^2}-\dfrac{3(2n+1)!}{(n!)^2}-3\cdot4^n$ \\
\cline{3-4}
& &Asymptotic Coefficient & $4^n \left( \dfrac{2 \sqrt{n}}{\sqrt{\pi}} - 3 + \dfrac{19}{4 \sqrt{\pi n}} - \dfrac{167}{64 \sqrt{\pi n^3}} 
 \right) + \bigo{\dfrac{4^n}{n^{5/2}}}$ \\
\cline{2-4}
& \multirow{1.2}{0.5 in}{Expected Length} 
& Asymptotic Coefficient& $\dfrac{2\sqrt{n}}{\sqrt{\pi}}-3+\dfrac{6}{\pi}+\dfrac{72-17\pi}{4 \pi^{3/2} \sqrt{n}} +\dfrac{108-35\pi}{2 \pi^2 n}+\bigo{\dfrac{1}{n^{3/2}}}$ \\
\hline
\multirow{6}{0.5in}{\begin{sideways}  \parbox{1.8in}{\begin{center}Downward Paths from The Root Which End in Leaves\end{center}}  \hspace{0.45in}  \end{sideways}} 
& \multirow{3.3}{0.5in}{Number} 
& Generating Function &\vspace{-.05in} $\dfrac{x}{\sqrt{1-4x}}-x$ \\
\cline{3-4}
& & Coefficient & \vspace{0.02in} $\displaystyle {2n-2 \choose n-1}$ \\
\cline{3-4}
& & Asymptotic Coefficient & $4^n \left( \dfrac{1}{4 \sqrt{\pi n}}+\dfrac{3}{32 \sqrt{\pi n^3}} + \dfrac{25}{512 \sqrt{\pi n^5}} \right) +\bigo{\dfrac{4^n}{n^{7/2}}}$ \\
\cline{2-4}
& \multirow{3.3}{0.5in}{Edges}
& Generating Function &\vspace{-.05in} $\dfrac{x}{1-4x}-\dfrac{x}{\sqrt{1-4x}}$ \\
\cline{3-4}
& & Coefficient & \vspace{0.02in} $\displaystyle 4^{n-1}-{2n-2 \choose n-1}$ \\
\cline{3-4}
& & Asymptotic Coefficient & $4^n \left(\dfrac{1}{4}- \dfrac{1}{4 \sqrt{\pi n}}-\dfrac{3}{32 \sqrt{\pi n^3}} - \dfrac{25}{512 \sqrt{\pi n^5}} \right) +\bigo{\dfrac{4^n}{n^{7/2}}}$ \\
\cline{2-4}
& \multirow{2.2}{0.5 in}{Expected Length} 
&Coefficient & $\dfrac{(2n-2)!!}{(2n-3)!!}-1$\\
\cline{3-4}
& & Asymptotic Coefficient& $\sqrt{\pi n} -1- \dfrac{3 \sqrt{\pi}}{8 \sqrt{n}}- \dfrac{7 \sqrt{\pi}}{128 \sqrt{n^3}} + \bigo{\dfrac{1}{n^{5/2}}}$ \\
\hline

\end{tabular}
}
\end{table}

\begin{table}
\makebox[1 \textwidth][c]{ 
\small
\def\arraystretch{2}
\begin{tabular}{|m{.8cm} | c | c | m{8 cm} |}

\hline
\multicolumn{4}{|c|}{Binary Trees, cont.} \\
\hline
\multirow{6}{0.5in}{ \begin{sideways}  \parbox{1.5in}{\begin{center}Downward Paths Which End in Leaves\end{center}}  \hspace{0.55in} \end{sideways}} 
& \multirow{3.3}{0.5in}{Number} 
& Generating Function &\vspace{-.05in} $\displaystyle \dfrac{x}{1-4x}-\dfrac{x}{\sqrt{1-4x}}$ \\
\cline{3-4}
& & Coefficient & \vspace{0.02in} $\displaystyle4^{n-1}-{2n-2 \choose n-1}$ \\
\cline{3-4}
& & Asymptotic Coefficient & $4^n \left(\dfrac{1}{4}- \dfrac{1}{4 \sqrt{\pi n}}-\dfrac{3}{32 \sqrt{\pi n^3}} - \dfrac{25}{512 \sqrt{\pi n^5}} \right) +\bigo{\dfrac{4^n}{n^{7/2}}}$ \\
\cline{2-4}& \multirow{3.3}{0.5in}{Edges}
& Generating Function & \vspace{-.05in}$\dfrac{x}{(1-4x)^{3/2}}-\dfrac{x}{1-4x}$ \\
\cline{3-4}
& & Coefficient & $\dfrac{(2n-1)!}{((n-1)!)^2}-4^{n-1}$ \\
\cline{3-4}
& &Asymptotic Coefficient & $4^n \left(\dfrac{\sqrt{n}}{2\sqrt{\pi}}-\dfrac{1}{4}-\dfrac{1}{16 \sqrt{\pi n}} + \dfrac{1}{256 \sqrt{\pi n^3}} 
\right)+\bigo{\dfrac{4^n}{n^{5/2}}}$ \\
\cline{2-4}
& \multirow{2.2}{0.5 in}{Expected Length} 
&Coefficient & $\dfrac{(2n-1)!!-(2n-2)!!}{(2n-2)!!-(2n-3)!!}$\\
\cline{3-4}
& & Asymptotic Coefficient& $\dfrac{2\sqrt{n}}{\sqrt{\pi}}-1+\dfrac{2}{\pi}+\dfrac{8-5\pi}{4 \pi^{3/2} \sqrt{n}} + \dfrac{4-\pi}{2 \pi^2 n}+\bigo{\dfrac{1}{n^{3/2}}}$ \\
\hline
\multirow{6}{0.5in}{\hspace{.2cm}\begin{sideways}  \parbox{1.8in}{\begin{center} Arbitrary Paths \end{center}}  \hspace{0.3in} \end{sideways}} 
& \multirow{3.3}{0.5in}{Number} 
& Generating Function & \vspace{-.05in}$\dfrac{9}{4(1-4x)^{3/2}}+\dfrac{3}{4\sqrt{1-4x}}+\dfrac{1}{2x}-\dfrac{1}{2x(1-4x)^{3/2}}$ \\
\cline{3-4}
& & Coefficient & \vspace{0.02in}$\displaystyle {n \choose 2} \dfrac{1}{n+1} {2n \choose n}$ \\
\cline{3-4}
& & Asymptotic Coefficient & $4^n \left( \dfrac{\sqrt{n}}{2\sqrt{\pi}} -\dfrac{17}{16\sqrt{\pi n}} + \dfrac{289}{256 \sqrt{\pi n^3}}  \right) + \bigo{\dfrac{4^n}{n^{5/2}}}$ \\ 
\cline{2-4}
& \multirow{3.3}{0.5in}{Edges}
& Generating Function & \vspace{-.05in}$\dfrac{2x}{(1-4x)^2}-\dfrac{1}{(1-4x)^{3/2}}+\dfrac{1}{1-4x}$ \\
\cline{3-4}
& & Coefficient & $\left(\dfrac{n+2}{2} \right)4^n-\dfrac{(2n+1)!}{(n!)^2}$ \\
\cline{3-4}
& & Asymptotic Coefficient &$4^n \left( \dfrac {n}{2} - \dfrac{2 \sqrt{n}}{\sqrt{\pi}} +1 - \dfrac{3}{4\sqrt{ \pi n}} + \dfrac{7}{64 \sqrt{\pi n^3}} \right) + \bigo{\dfrac{4^n}{n^{5/2}}}$ \\ 
\cline{2-4}
& \multirow{1.2}{0.5 in}{Expected Length} 
& Asymptotic Coefficient& $\sqrt{\pi n} -4+\dfrac{33 \sqrt{\pi}}{8 \sqrt{n}} - \dfrac{10}{n} + \dfrac{833\sqrt{\pi}}{128 n^{3/2}}-\dfrac{12}{n^2} + \bigo{\dfrac{1}{n^{5/2}}}$ \\ 
\hline
\multirow{6}{0.5in}{\hspace{.2cm}\begin{sideways}  \parbox{1.8in}{\begin{center} Paths Between Leaves \end{center}}  \hspace{0.265in} \end{sideways}} 
& \multirow{2.2}{0.5in}{Number} 
& Generating Function &\vspace{-.05in} $\dfrac{x^3}{1-4x}$ \\
\cline{3-4}
& & Coefficient & $\displaystyle 4^{n-3}$ \\
\cline{2-4}
& \multirow{3.3}{0.5in}{Edges}
& Generating Function &$\dfrac{2x^3}{(1-4x)^{3/2}}$ \\
\cline{3-4}
& & Coefficient & $\dfrac{2(2n-5)!}{((n-3)!)^2}$ \\
\cline{3-4}
& & Asymptotic Coefficient & $\displaystyle 4^n \left(\frac{\sqrt{n}}{16 \sqrt{\pi}} - \frac{9}{128 \sqrt{\pi n}} - \frac{79}{2048 \sqrt{\pi n^3}}\right) + \bigo{\frac{4^n}{n^{5/2}}}$ \\
\cline{2-4}
& \multirow{2.2}{0.5 in}{Expected Length} 
&Coefficient & $\dfrac{2(2n-5)!!}{(2n-6)!!}$\\
\cline{3-4}
& & Asymptotic Coefficient& $\displaystyle \frac{4\sqrt{n}}{\sqrt{\pi}} - \frac{9}{2 \sqrt{\pi n}} - \frac{79}{32 \sqrt{\pi n^3}} + \bigo{\frac{1}{n^{5/2}}}$ \\
\hline
\end{tabular}
}
\end{table}

\clearpage

\bibliographystyle{amsplain}
\bibliography{bib}
\end{document}